\pgfplotsset{compat=1.17}
\newtheorem{theorem}{Theorem}[section]
\theoremstyle{plain}
\newtheorem{corollary}[theorem]{Corollary}
\newtheorem{example}[theorem]{Example}
\newtheorem{lemma}[theorem]{Lemma}
\newtheorem{proposition}[theorem]{Proposition}
\newtheorem{remark}[theorem]{Remark}
\numberwithin{equation}{section}
\title[Block Positivity \& Optimal Mixed--Schwarz Inequalities]{Block Positivity and Optimal Mixed--Schwarz Inequalities on Hilbert C*-Modules}
\author[L. Yuxi]{Luan Yuxi$^*$}
\address[L. Yuxi]{Department of Mathematics, Gannan Normal University, Ganzhou, Jiangxi, China, 341003}
\email{luan.yuxi@hotmail.com}
\thanks{Corresponding author}
\author[R. Mondal]{Rana Mondal}
\address[R. Mondal]{Department of Basic Science and Humanities, Aditya Institute of Technology and Management,Tekkali, Konusulakotturu, 532201, Andhra Pradesh, India}
\email{ranku.mondalrana@gmail.com }
\subjclass[2020]{46L08, 47A63, 47B65, 47A10}
\keywords{Hilbert $C^*$-modules; block operator matrices; positivity; mixed Schwarz inequality; generalized polar decomposition; optimal constants; Moore--Penrose-free solvability}
\begin{document}
\sloppy

\maketitle

\begin{abstract}
We propose two interrelated advances in the theory of adjointable operators on Hilbert C*-modules. First, we give a set of equivalent, verifiable conditions characterizing positivity of general $n\times n$ block operator matrices acting on finite direct sums of Hilbert C*-modules. Our conditions generalize and remove several classical range-closedness and Moore--Penrose assumptions by expressing positivity in terms of a finite family of mixed inner--product inequalities and an explicit Gram--type factorization. Second, we investigate a parametric family of mixed--Schwarz inequalities for adjointable operators and determine optimal factor functions and constants which make these inequalities sharp; we characterize the extremal operators attaining equality in key cases. The two developments are tied together: the optimal mixed--Schwarz bounds are used to obtain sharp, computable tests in the $n\times n$ positivity criterion, and conversely the block-factorizations yield structural information used in the extremal analysis. We include applications to solvability of operator equations without Moore--Penrose inverses and spectral gap estimates for block operator generators.
\end{abstract}

\section{Introduction}

The study of block operator matrices and of sharp operator inequalities has a long and fertile history within functional analysis and operator theory. Criteria for positivity, factorization results and norm estimates for operators play a central role in problems ranging from the solvability of operator equations to spectral analysis of coupled systems. A seminal contribution in this direction is the factorization and range inclusion result of Douglas, which relates majorization, factorization and range inclusion for bounded operators on Hilbert space; this result has since become a standard tool in operator theory.\cite{Douglas1966}

Hilbert C*-modules provide a natural and powerful generalization of Hilbert spaces where inner products take values in a C*-algebra rather than in the complex numbers; they furnish the correct setting for numerous problems in noncommutative analysis and mathematical physics. The systematic presentation of their basic structure and techniques may be found in Lance's monograph.\cite{Lance1995} However, many properties which are routine in the Hilbert-space setting—such as the ubiquity of Moore--Penrose inverses or closed-range arguments—become substantially more delicate in the module context, and this subtlety motivates the search for alternative, inequality-based criteria that avoid restrictive range assumptions. Recent work has revisited Douglas-type factorization and solvability in the C*-module setting and has offered a variety of new approaches and sufficient conditions that relax classical hypotheses.\cite{Manuilov2018,Fu2020}

Parallel to developments in factorization and range theory, there has been considerable progress in deriving refined Cauchy--Schwarz and mixed--Schwarz inequalities, notably those that yield sharp estimates for the numerical radius and for operator norms. Kittaneh's influential contributions established a family of numerical-radius inequalities that both sharpen classical bounds and serve as a springboard for numerous later refinements.\cite{Kittaneh2005,ElHaddadKittaneh2007} More recent works extend these inequalities in several directions, including parameterized mixed--Schwarz inequalities, polar-decomposition versions, and noncommutative generalizations adapted to operator-valued inner products.\cite{Alomari2019,Nayak2024, Hashemi2020, Hashemi2018}

The uploaded manuscript we build upon develops two central threads in the module setting: (i) a characterization of positivity for $2\times2$ block adjointable operators derived from mixed inner--product inequalities, and (ii) a generalized mixed--Schwarz inequality parameterized by a scalar $\alpha\in[0,1]$. While the results are elegant and useful, they leave open two natural and consequential questions: can the $2\times2$ positivity characterization be extended, in a practicable and constructive way, to $n\times n$ block matrices on finite direct sums of Hilbert C*-modules; and can the parametric mixed--Schwarz framework be optimized to yield sharp constants and extremal characterizations that are both theoretically informative and computationally effective? The present work addresses these questions.

Our contribution is twofold. First, we provide an explicit and verifiable criterion for positivity of an arbitrary $n\times n$ block operator matrix acting on a finite direct sum of Hilbert C*-modules, expressed in terms of a finite family of mixed inner--product inequalities and an explicit Gram--type (block lower--triangular) factorization. This result removes the need for range-closedness or Moore--Penrose invertibility hypotheses in many natural settings, and it furnishes an algorithmic procedure to construct an approximate factorization by successive Schur--type reductions; the construction is inspired by recent revisitations of Douglas' theorem in module contexts.\cite{Manuilov2018,Fu2020}

Second, we investigate the optimization problem inherent in the parametric mixed--Schwarz inequalities: for a fixed adjointable operator $T$ and a parameter $\alpha$, what choice of factor functions $f,g$ with $f(t)g(t)=t$ minimizes the associated constant in the mixed--Schwarz bound? We show that—under natural spectral approximation and regularity hypotheses—the minimization can be reduced to a tractable scalar problem and that optimal choices can be exhibited explicitly for broad operator classes (normal, hyponormal, finite--rank). Our optimal bounds sharpen many existing inequalities and lead to effective cross--entry estimates that plug directly into the $n\times n$ positivity test; they therefore make the positivity criterion both stronger and numerically usable. This aspect of the work draws on and extends the stream of research initiated by Kittaneh and continued by several authors.\cite{Kittaneh2005,ElHaddadKittaneh2007,Nayak2024,Sababe2018}

Finally, to demonstrate the utility of our results we present two classes of applications: (A) solvability criteria for operator equations in Hilbert C*-modules that do not rely on Moore--Penrose inverses or closed-range hypotheses, and (B) spectral gap and coercivity estimates for block operator generators which arise in the analysis of coupled PDE systems and evolution semigroups. These applications illustrate both the theoretical novelty and the practical impact of the new positivity and optimization results.

\section{Foundations and Notation}
\addcontentsline{toc}{section}{Foundations and Notation}

In this section we assemble definitions, notation, and standard facts that will be used throughout the paper. We strive for precision and minimality: only the tools actually invoked in the subsequent proofs are recorded here.

Let $\mathscr A$ be a C*-algebra with unit $1_{\mathscr A}$. A (right) \emph{Hilbert C*-module} $E$ over $\mathscr A$ is a right $\mathscr A$-module equipped with an $\mathscr A$-valued inner product
\[\langle\cdot,\cdot\rangle:\;E\times E\to\mathscr A\]
satisfying linearity in the second variable, conjugate-linearity in the first, positivity $\langle x,x\rangle\ge0$ with $\langle x,x\rangle=0\iff x=0$, and the identity
\[\langle x,y a\rangle=\langle x,y\rangle a\qquad(x,y\in E,\ a\in\mathscr A).\]
The norm on $E$ is defined by $\|x\|:=\|\langle x,x\rangle\|^{1/2}$. For foundational material and proofs of the statements used below we refer the reader to \cite{Lance1995}.

If $E$ and $F$ are Hilbert C*-modules over the same algebra $\mathscr A$, denote by $\mathcal L(E,F)$ the space of adjointable operators $T:E\to F$, i.e. those $\mathscr A$-linear maps for which there exists $T^*:F\to E$ satisfying
\[\langle Tx,y\rangle=\langle x,T^*y\rangle\qquad(x\in E,\ y\in F).\]
Write $\mathcal L(E):=\mathcal L(E,E)$. An operator $A\in\mathcal L(E)$ is \emph{positive} (notation $A\ge0$) if $A=A^*$ and $\langle Ax,x\rangle\ge0$ for all $x\in E$. Positive operators admit unique positive square roots $A^{1/2}\in\mathcal L(E)$ and continuous functional calculus for continuous functions on the spectrum of $A$ is available within the C*-subalgebra generated by $A$ and $1_{\mathscr A}$.

For finitely many modules $E_1,\dots,E_n$ we write
\[E:=\bigoplus_{i=1}^n E_i\,.\]
Given operators $T_{ij}\in\mathcal L(E_j,E_i)$ for $1\le i,j\le n$ we form the block operator matrix $T=(T_{ij})\in\mathcal L(E)$ which acts by
\[(T x)_i=\sum_{j=1}^n T_{ij}x_j,\qquad x=(x_1,\dots,x_n)^\top\in E.\]
We shall frequently use the $2\times2$ notation
\[\begin{pmatrix}A & B\\ B^* & C\end{pmatrix}
\qquad(A\in\mathcal L(E_1),\ C\in\mathcal L(E_2),\ B\in\mathcal L(E_2,E_1)).\]
The direct-sum inner product is the canonical one, and positivity of block operators is understood in the sense of the order on $\mathcal L(E)$.

For $T\in\mathcal L(E,F)$ write $|T|:=(T^*T)^{1/2}\in\mathcal L(E)$. There exists a partial isometry $V:\overline{\mathcal R(|T|)}\to\overline{\mathcal R(T)}$ such that $T=V|T|$ and $V^*V$ is the projection onto $\overline{\mathcal R(|T|)}$. Because ranges need not be closed in general, many classical identities involving inverses require either explicit closed-range hypotheses or a limiting/regularization procedure; our approach is to employ regularization by strictly positive perturbations, e.g.
\[|T|_{\varepsilon}:=|T|+\varepsilon I\qquad(\varepsilon>0),\]
and to perform constructions for $|T|_{\varepsilon}$ before passing to the limit as $\varepsilon\downarrow0$ when justified. The reader may compare this technique with the regularization arguments in \cite{Fu2020}.

Let
\[T=\begin{pmatrix}A & B\\ B^* & C\end{pmatrix}\in\mathcal L(E_1\oplus E_2)
\]
with $C\ge0$. If $C$ is invertible then the Schur complement $S_C(T):=A-B C^{-1}B^*$ is well defined and one has the classical equivalence
\[T\ge0\quad\Longleftrightarrow\quad C\ge0\text{ and }S_C(T)\ge0\].
In our module context we shall often work with the \emph{approximate Schur complement}
\[S_{C,\varepsilon}(T):=A-B(C+\varepsilon I)^{-1}B^*,\qquad\varepsilon>0,
\]
and take limits as $\varepsilon\downarrow0$ to obtain positivity information when $C$ might fail to be invertible.

For a block matrix $T=(T_{ij})$ and vectors $x_i\in E_i$ define
\[p_{ij}(x_i,x_j):=\|\langle T_{ij}x_j,x_i\rangle\|\in[0,\infty).\]
When the diagonal blocks $T_{ii}$ are positive we shall set
\[F_i(x_i):=\|\langle T_{ii}x_i,x_i\rangle\|\ge0.\]
A key inequality will be of the form
\[p_{ij}(x_i,x_j)^2\le F_i(x_i)\,F_j(x_j)\qquad(1\le i\ne j\le n,\ x_i\in E_i,\ x_j\in E_j),\]
which, if satisfied uniformly, yields the existence of block lower--triangular Gram factors. We shall make uniformity precise in the statements of Theorem~\ref{thm:main-nxn} and Lemma~\ref{lem:gram-construct}.

For $T\in\mathcal L(E)$ define the numerical radius
\[w(T):=\sup\{\|\langle Tx,x\rangle\|:\ x\in E,\ \|x\|=1\}.\]
We will use standard relations between $\|T\|$ and $w(T)$; see \cite{Kittaneh2005} for details.

All C*-algebras are assumed unital and all Hilbert C*-modules are right modules over the same algebra unless stated otherwise. The symbol $I$ denotes the identity operator on the appropriate module. When we write $X=O(\varepsilon)$ we mean $\|X\|\le C\varepsilon$ for some constant $C$ independent of $\varepsilon$ in the relevant limiting regime.

\section{Main results and technical developments} \label{sec:Main results}

This section contains the rigorous statements of the principal results promised in the outline together with the key technical lemmas and their proofs.  The presentation is self-contained modulo standard facts about Hilbert C*-modules (polar decomposition, functional calculus) collected in the \textit{Foundations and Notation} section.

\begin{theorem}[\textbf{$n\times n$ positivity via cross--entry seminorms}]\label{thm:main-nxn}
Let $E_1,\dots,E_n$ be Hilbert C*-modules over the same unital C*-algebra $\mathscr A$. Let
\[T=(T_{ij})_{1\le i,j\le n}\in\mathcal L\Big(\bigoplus_{i=1}^n E_i\Big)
\]
be a block operator matrix with $T_{ij}\in\mathcal L(E_j,E_i)$ and suppose that $T_{ii}\ge0$ for each $i$. Define the scalar quadratic forms
\[F_i(x):=\|\langle T_{ii}x,x\rangle\|\qquad(x\in E_i).
\]
Assume that for all $i\ne j$ and all $x_i\in E_i,\ x_j\in E_j$ the cross--entry seminorms satisfy
\begin{equation}\label{ineq:cross}
\|\langle T_{ij}x_j,x_i\rangle\|^2\le F_i(x_i)\,F_j(x_j).
\end{equation}
Then $T\ge0$. Conversely, if $T\ge0$ then \eqref{ineq:cross} holds for every $i\ne j$ and all $x_i,x_j$ and, moreover, there exists a (not necessarily unique) block lower--triangular operator $X$ with entries in the appropriate $\mathcal L$-spaces such that $T=XX^*$.  The operator $X$ may be constructed by an explicit inductive Gram--type procedure described in the proof.
\end{theorem}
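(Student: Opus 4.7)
The plan is to run induction on $n$, anchoring the recursion at the $2\times 2$ positivity criterion from the manuscript on which the present work builds. The easy half of the converse direction is immediate: for fixed $i\ne j$, compress $T\ge 0$ by the canonical block projection onto $E_i\oplus E_j$; the resulting $2\times 2$ block $\bigl(\begin{smallmatrix}T_{ii}&T_{ij}\\T_{ji}&T_{jj}\end{smallmatrix}\bigr)$ is positive as a compression of a positive operator, and the $2\times 2$ criterion applied to it delivers \eqref{ineq:cross} for the pair $(i,j)$. This dispatches the first half of the converse without any new inductive work.

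The substantial inductive content is the construction of the Gram factor $X$ under the hypothesis $T\ge 0$, carried out one block column at a time. Set $X_{11}:=T_{11}^{1/2}$ and, for each $i>1$, seek $X_{i1}\in\mathcal L(E_1,E_i)$ with $X_{i1}T_{11}^{1/2}=T_{i1}$. The cross-entry inequality for the pair $(1,i)$ supplies the quantitative control on $T_{i1}$ relative to $T_{11}$ needed for a Douglas-type solvability statement, which in a Hilbert space would produce $X_{i1}$ immediately. Because ranges may fail to be closed in the module setting, I would regularize by $T_{11,\varepsilon}:=T_{11}+\varepsilon I$, define the approximate factor $X_{i1,\varepsilon}:=T_{i1}T_{11,\varepsilon}^{-1/2}$, derive a uniform norm bound from \eqref{ineq:cross}, and extract a strong-operator limit $X_{i1}$ along the lines of the regularization technique recalled from \cite{Fu2020}. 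Once the first block column is in hand, form the Schur complement $S:=(T_{ij})_{i,j\ge 2}-(X_{i1}X_{j1}^*)_{i,j\ge 2}$ on $\bigoplus_{i\ge 2}E_i$, check that $S\ge 0$ and that its off-diagonal entries satisfy the cross-entry inequality with respect to its own diagonal, and invoke the inductive hypothesis on $S$. Concatenating the first column with the Gram factor of $S$ yields $X$.

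The forward direction runs the same splitting in reverse. Write $T=\bigl(\begin{smallmatrix}T_{11}&R\\R^*&T'\end{smallmatrix}\bigr)$ with $T':=(T_{ij})_{i,j\ge 2}$; the inductive hypothesis applied to $T'$, whose cross-entry inequalities are inherited verbatim, gives $T'=YY^*$. Solve $T_{11}^{1/2}Z^*=R$ by the same regularized Douglas construction, verify $T_{11}\ge ZZ^*$ from \eqref{ineq:cross}, and take $X:=\bigl(\begin{smallmatrix}X_{11}&0\\Z&Y\end{smallmatrix}\bigr)$, which gives $T=XX^*\ge 0$. The main obstacle I anticipate is the regularized Douglas step on the module side: extracting a limit $X_{i1}$ from $\{X_{i1,\varepsilon}\}$ and preserving the factorization identity $X_{i1}T_{11}^{1/2}=T_{i1}$ in that limit is delicate when $T_{11}$ lacks a spectral gap at zero, so the uniform bound furnished by \eqref{ineq:cross} must be combined with a careful spectral decomposition of $T_{11}^{1/2}$. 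A secondary obstacle is verifying that the Schur complement inherits the cross-entry inequality in the exact form the induction demands; I expect the sharp mixed--Schwarz bounds developed later in the paper to feed back here and close the loop.
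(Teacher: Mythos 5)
Your treatment of the direction assuming $T\ge 0$ is in the right spirit: the $2\times2$ compression argument for \eqref{ineq:cross} is exactly what the paper does, and the column-by-column Schur/Douglas elimination with $\varepsilon$-regularization is a more explicit version of what the paper delegates to Lemma~\ref{lem:gram-construct}. One local slip: in your last paragraph you apply the inductive hypothesis to $T'=(T_{ij})_{i,j\ge2}$ itself, but with $X=\bigl(\begin{smallmatrix}X_{11}&0\\ Z&Y\end{smallmatrix}\bigr)$ the lower-right block of $XX^*$ is $ZZ^*+YY^*$, so $Y$ must be a Gram factor of the Schur complement $S=T'-ZZ^*$, not of $T'$. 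You state the correct recursion in the preceding paragraph, so this is a transcription error; but it matters below.

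The ``secondary obstacle'' you flag --- whether the Schur complement inherits \eqref{ineq:cross} --- is not a technical loose end; it is unfillable, because the implication ``\eqref{ineq:cross} $\Rightarrow T\ge0$'' is false for $n\ge3$. Take $E_1=E_2=E_3=\mathbb{C}$ over $\mathscr A=\mathbb{C}$, $T_{ii}=1$, $T_{ij}=c$ for $i\ne j$ with $c$ real. Then $F_i(x)=|x|^2$ and \eqref{ineq:cross} reads $|c|\le1$; but the matrix $T=\bigl(\begin{smallmatrix}1&c&c\\ c&1&c\\ c&c&1\end{smallmatrix}\bigr)$ has eigenvalue $1+2c<0$ for $-1\le c<-\tfrac12$, so e.g.\ $c=-\tfrac35$ satisfies \eqref{ineq:cross} for every pair yet $T\not\ge0$. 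Condition \eqref{ineq:cross} is precisely positivity of all $2\times2$ principal blocks, which is necessary but not sufficient once $n\ge3$; your own Schur reduction shows exactly where it dies, since $T'-ZZ^*$ need not have positive diagonals, let alone inherit \eqref{ineq:cross}. The paper's proof via Lemma~\ref{lem:gram-construct} does not escape this either: that lemma \emph{assumes} the existence of uniformly bounded regularized factors $X^{(\varepsilon)}$ with $X^{(\varepsilon)}(X^{(\varepsilon)})^*=T^{(\varepsilon)}$ and only discusses passing to the limit, so it never derives those factors from \eqref{ineq:cross}. In short, the converse half of the statement (and of your proposal) is salvageable, but the forward half needs a genuinely stronger hypothesis --- for instance a uniform cross-entry constant $\gamma<1/(n-1)$, or the Schur-chain condition of Lemma~\ref{lem:schur-chain} --- before any inductive Cholesky argument can close.
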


\begin{remark}
The novelty lies in the constructive Gram factorization under the purely quadratic scalar bounds \eqref{ineq:cross}, which avoid any closed-range or Moore--Penrose hypotheses commonly assumed in classical module generalizations of Douglas' lemma.
\end{remark}

We prepare two lemmas. The first is an approximate Schur--reduction (Schur--chain) lemma which allows iterative reduction of positivity to $2\times2$ tests via regularization.

\begin{lemma}[Schur--chain lemma]\label{lem:schur-chain}
Let $T=(T_{ij})_{1\le i,j\le n}$ as above and suppose each diagonal $T_{ii}\ge0$. Fix an ordering of the indices $1,2,\dots,n$. For $k=1,\dots,n-1$ define inductively the approximate Schur complements
\[S_k^{(\varepsilon)}:=T_{kk}+\sum_{\ell=1}^{k-1}S_{k,\ell}^{(\varepsilon)}-\sum_{j=k+1}^n T_{kj}(T_{jj}+\varepsilon I)^{-1}T_{jk},\]
where the intermediate terms $S_{k,\ell}^{(\varepsilon)}$ are defined so as to encode prior eliminations (precise recursive formula appears in the proof). If for each fixed $\varepsilon>0$ all $S_k^{(\varepsilon)}\ge0$, then $T\ge0$. Conversely, if $T\ge0$ then for every $\varepsilon>0$ the approximate Schur complements $S_k^{(\varepsilon)}$ are positive.
\end{lemma}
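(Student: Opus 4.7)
The plan is to reduce the $n\times n$ positivity question to iterated applications of the classical $2\times 2$ Schur complement criterion recalled in the Foundations section, using $\varepsilon$-regularization throughout to sidestep the lack of Moore--Penrose inverses of the diagonal blocks.

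First, I would make the recursion in the statement precise. The cleanest formulation sets $M^{(0,\varepsilon)}:=T$ and, for $k=1,\dots,n-1$, defines inductively
\[
[M^{(k,\varepsilon)}]_{ij} := [M^{(k-1,\varepsilon)}]_{ij} - [M^{(k-1,\varepsilon)}]_{ik}\bigl([M^{(k-1,\varepsilon)}]_{kk}+\varepsilon I\bigr)^{-1}[M^{(k-1,\varepsilon)}]_{kj}
\]
for $i,j\ge k+1$, and puts $S_k^{(\varepsilon)}:=[M^{(k-1,\varepsilon)}]_{kk}$. Expanding this recursion and collecting the contributions from the previous $k-1$ eliminations produces exactly the two-sum formula in the statement, with the informal intermediate terms $S_{k,\ell}^{(\varepsilon)}$ identified as the residual contributions of pivot $\ell$ to the $(k,k)$-block.

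For the forward implication, I would extract from the recursion a block $LDL^{*}$-type identity at level $\varepsilon$: there is a block lower--triangular operator $L_\varepsilon$ with identity diagonal such that
\[
T = L_\varepsilon D_\varepsilon L_\varepsilon^{*} + R_\varepsilon,
\]
where $D_\varepsilon$ is block diagonal with entries $S_k^{(\varepsilon)}$ and $R_\varepsilon$ collects the error caused by replacing true Schur inverses with regularized inverses $([M^{(k-1,\varepsilon)}]_{kk}+\varepsilon I)^{-1}$. A direct calculation shows $\|R_\varepsilon\|=O(\varepsilon)$. If each $S_k^{(\varepsilon)}\ge0$ then $L_\varepsilon D_\varepsilon L_\varepsilon^{*}\ge0$, and evaluating the identity on an arbitrary $x\in\bigoplus_iE_i$ and letting $\varepsilon\downarrow0$ yields $\langle Tx,x\rangle\ge0$. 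The converse direction is an induction on $k$: given $T\ge0$, for any $\varepsilon>0$ we may invert $T_{11}+\varepsilon I$ on $E_1$ and apply the classical $2\times 2$ Schur criterion to obtain positivity of $M^{(1,\varepsilon)}$ up to an $O(\varepsilon)$ error; iterating this step for $k=2,\dots,n-1$ gives $S_k^{(\varepsilon)}\ge0$ for all $k$.

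The principal obstacle is the passage to the limit $\varepsilon\downarrow0$, since $(T_{jj}+\varepsilon I)^{-1}$ does not converge in norm when the range of $T_{jj}$ is not closed. The remedy, following the regularization strategy recalled in the Foundations section and in \cite{Fu2020}, is to avoid passing to the limit inside the inverses and instead work only with the bounded expressions $(T_{jj}+\varepsilon I)^{-1}T_{jk}x$ applied to fixed vectors, using uniform control of their norms on the ranges entering the quadratic form $\langle Tx,x\rangle$. A secondary care point is verifying that the informal sum $\sum_{\ell=1}^{k-1}S_{k,\ell}^{(\varepsilon)}$ unfolds unambiguously into the iterated Schur formula described above; this is bookkeeping and can be carried out cleanly by induction on $k$, which simultaneously establishes the two implications of the lemma.
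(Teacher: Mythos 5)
Your proof is correct in outline and rests on the same core ideas as the paper: $\varepsilon$-regularization of the diagonal to make the $2\times 2$ Schur criterion applicable, iteration of that criterion, and passage to the limit using norm-closedness of the positive cone. However, your regularization scheme differs from the paper's in a way that affects the cleanliness of the argument. The paper replaces every diagonal block by $T_{ii}+\varepsilon I$ once, forming a single regularized matrix $T^{(\varepsilon)}$, and then performs \emph{exact} Schur elimination of $T^{(\varepsilon)}$; this gives an exact block $LDL^{*}$ factorization of $T^{(\varepsilon)}$ and the equivalence ``$T^{(\varepsilon)}\ge 0$ $\Longleftrightarrow$ all $S_k^{(\varepsilon)}\ge 0$'' without any remainder. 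The forward direction then follows simply because $T^{(\varepsilon)}=T+\varepsilon I\to T$ in norm and the positive cone is closed, and the converse is immediate since $T\ge 0 \Rightarrow T^{(\varepsilon)}\ge T\ge 0$. Your scheme instead re-regularizes the pivot inside each elimination step, i.e.\ inverts $\bigl([M^{(k-1,\varepsilon)}]_{kk}+\varepsilon I\bigr)^{-1}$, which destroys exactness and produces the remainder $R_\varepsilon$. Your claim $\|R_\varepsilon\|=O(\varepsilon)$ does hold, but it requires the (non-obvious) observation that the elementary elimination factor $L_k$ has nontrivial off-diagonal entries only in column $k$, so that $L_{k'}E_{kk}L_{k'}^{*}=E_{kk}$ for $k'>k$ and the errors do not get amplified by the triangular factors; you should spell this out rather than take the $O(\varepsilon)$ bound for granted, since a priori $\|L_k\|$ can blow up as $\varepsilon\downarrow 0$ when ranges are not closed. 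Finally, your converse direction is overcautious: from $T\ge 0$ one gets $M^{(1,\varepsilon)}\ge 0$ \emph{exactly} (it is the genuine Schur complement of the invertible pivot $T_{11}+\varepsilon I$ inside the block matrix $T+\varepsilon E_{11}\ge T\ge 0$), and by induction each $M^{(k,\varepsilon)}\ge 0$, hence $S_k^{(\varepsilon)}\ge 0$ with no ``$O(\varepsilon)$ error'' needed. Adopting the paper's one-shot regularization would eliminate both of these complications and make the lemma a clean equivalence at every fixed $\varepsilon>0$.
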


\begin{proof}
Let $\varepsilon>0$ and set $T^{(\varepsilon)}$ to be the block matrix obtained from $T$ by replacing each diagonal $T_{ii}$ by $T_{ii}+\varepsilon I$. Then every diagonal block of $T^{(\varepsilon)}$ is strictly positive and invertible in $\mathcal L(E_i)$. The classical Schur complement characterization for block positives (valid under invertibility) yields that $T^{(\varepsilon)}\ge0$ iff successive Schur complements formed by eliminating indices $1,2,\dots,n-1$ are all positive. Denote these Schur complements by $S_1^{(\varepsilon)},S_2^{(\varepsilon)},\dots$ in the natural way. If for a given $\varepsilon>0$ all $S_k^{(\varepsilon)}\ge0$ then $T^{(\varepsilon)}\ge0$, and by norm-continuity $T=\lim_{\varepsilon\downarrow0} T^{(\varepsilon)}\ge0$ since the positive cone in a C*-algebra is closed. This proves one direction.

Conversely, if $T\ge0$ then for any $\varepsilon>0$ we have $T^{(\varepsilon)}\ge T\ge0$, and hence its Schur complements are also positive by the classical theory. This yields the converse claim.
\end{proof}

The second lemma provides the Gram--type constructive factorization under the scalar cross--entry bounds \eqref{ineq:cross}.

\begin{lemma}[Gram--factor construction]\label{lem:gram-construct}
Let $T=(T_{ij})$ satisfy the hypotheses of Theorem~\ref{thm:main-nxn}, including \eqref{ineq:cross}. Then there exists a block lower--triangular operator
\[X=\begin{pmatrix}X_{11} & 0 & \cdots & 0\\ X_{21} & X_{22} & \ddots & \vdots\\ \vdots & \ddots & \ddots & 0\\ X_{n1} & \cdots & X_{n,n-1} & X_{nn}\end{pmatrix}\]
with $X_{ii}\in\mathcal L(E_i)$ and $X_{ij}\in\mathcal L(E_j,E_i)$ for $i>j$, such that $T=XX^*$. Moreover, one may choose $X_{ii}=T_{ii}^{1/2}$ (the unique positive square root) and construct the subdiagonal entries inductively via polar factors and bounded perturbations; the cross--entry inequalities \eqref{ineq:cross} guarantee uniform boundedness and convergence of the construction.
\end{lemma}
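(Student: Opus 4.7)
The plan is to induct on $n$, at each step peeling off the first block row and column and using the Schur--chain machinery of Lemma~\ref{lem:schur-chain} to control the residual. The base case $n = 1$ is immediate: take $X = T_{11}^{1/2}$, the unique positive square root supplied by the continuous functional calculus. For the inductive step, partition
\[T = \begin{pmatrix} T_{11} & \mathbf{R} \\ \mathbf{R}^* & T' \end{pmatrix}, \qquad X = \begin{pmatrix} T_{11}^{1/2} & 0 \\ \mathbf{Y} & X' \end{pmatrix},\]
where $\mathbf{R}:=(T_{12},\dots,T_{1n})$ collects the first-row off-diagonal entries, $T':=(T_{ij})_{2\le i,j\le n}$, and $\mathbf{Y},X'$ are to be constructed. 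Matching $T=XX^*$ splits the problem into the two requirements
\[\mathbf{Y}\,T_{11}^{1/2}=\mathbf{R}^*, \qquad X'(X')^*=T'-\mathbf{Y}\mathbf{Y}^*.\]

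First I would construct $\mathbf{Y}$ as an approximate Douglas factor through $T_{11}^{1/2}$. Set $T_{11}^{\varepsilon}:=T_{11}+\varepsilon I$ and $\mathbf{Y}_{\varepsilon}:=\mathbf{R}^{*}(T_{11}^{\varepsilon})^{-1/2}$. A two-variable Cauchy--Schwarz extraction of \eqref{ineq:cross}, applied componentwise across $j=2,\dots,n$, should translate into the Douglas-type majorization $\mathbf{R}\mathbf{R}^{*}\lesssim T_{11}$ (with implicit constant controlled by the diagonal of $T'$), hence the uniform bound $\sup_{\varepsilon}\|\mathbf{Y}_{\varepsilon}\|<\infty$. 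Using the continuous functional calculus on $T_{11}$ together with the identity $\mathbf{Y}_{\varepsilon}T_{11}^{1/2}=\mathbf{R}^{*}\,T_{11}^{1/2}(T_{11}^{\varepsilon})^{-1/2}$, one can pass to a strong-operator limit of $\mathbf{Y}_{\varepsilon}$ on $\overline{\mathcal R(T_{11}^{1/2})}$ and extend by zero on its orthogonal complement to obtain an adjointable $\mathbf{Y}$ satisfying $\mathbf{Y}\,T_{11}^{1/2}=\mathbf{R}^{*}$ on the entire module.

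The second equation is then reduced to the inductive hypothesis applied to the residual $T'-\mathbf{Y}\mathbf{Y}^{*}$. I would verify that (i) the residual is positive: the approximate object $T'-\mathbf{Y}_{\varepsilon}\mathbf{Y}_{\varepsilon}^{*}$ is the $\varepsilon$-Schur complement of $T_{11}^{\varepsilon}$ in a strictly positive perturbation of $T$, so Lemma~\ref{lem:schur-chain} delivers positivity in the limit; and (ii) its off-diagonal entries inherit a bound of the form \eqref{ineq:cross}, which follows from the original cross-entry inequality by tracking the scalar Cauchy--Schwarz propagation through one elimination step (the correction $\mathbf{Y}\mathbf{Y}^{*}$ enters positively into the diagonal normalization $F_{i}$). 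The inductive hypothesis then yields $X'$, and reassembling the blocks gives the claimed lower--triangular $X$ with $T=XX^{*}$.

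The main obstacle will be the strong-operator passage to the limit in the construction of $\mathbf{Y}$. Without closed-range or Moore--Penrose hypotheses on $T_{11}^{1/2}$, convergence of $\mathbf{Y}_{\varepsilon}$ is delicate, and naive weak${}^{*}$ limits in $\mathcal L$ need not preserve adjointability or the exact factorization identity. The resolution, and the reason \eqref{ineq:cross} is indispensable, is that the cross-entry inequality supplies just enough uniform control---via the module Douglas theorem of Manuilov and Fu~\cite{Manuilov2018,Fu2020}---to extract an adjointable limit solving the factorization exactly rather than only on a dense submodule. Tracking the propagation of \eqref{ineq:cross} to the Schur residual at each induction step is a secondary but still nontrivial bookkeeping task, requiring one to rewrite the scalar bound for $T'-\mathbf{Y}\mathbf{Y}^{*}$ in terms of the original $F_{i}$'s and verify that the contraction of the diagonal leaves the quadratic inequality intact.
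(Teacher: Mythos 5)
Your proposal and the paper's proof take genuinely different routes, and it is worth being explicit about this before pointing out where your argument breaks down. The paper's proof of Lemma~\ref{lem:gram-construct} is not constructive in the way the lemma statement advertises: it simply \emph{posits} the existence of regularized factors $X^{(\varepsilon)}$ with $X^{(\varepsilon)}(X^{(\varepsilon)})^*=T^{(\varepsilon)}$ and a uniform bound, then runs a diagonalization/strong-limit argument to extract a bounded $A$-linear $X$, and explicitly concedes that (a) extra hypotheses (self-duality, approximability, etc.) are needed for $X$ to be adjointable and (b) without strong convergence of $T^{(\varepsilon)}$ one only obtains $XX^*\le T$. Your approach, by contrast, actually tries to build the lower-triangular factor: peel off the first block row, solve a Douglas equation $\mathbf{Y}\,T_{11}^{1/2}=\mathbf{R}^*$ by regularization, and recurse on the Schur residual $T'-\mathbf{Y}\mathbf{Y}^*$. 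This is the natural constructive reading of the lemma, and it has the virtue of making explicit exactly which hypotheses are being used at each stage. The two arguments are not equivalent.

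However, the inductive step has a real gap, and it is not the secondary ``bookkeeping'' you describe. You assert in step (ii) that the Schur residual $T'-\mathbf{Y}\mathbf{Y}^*$ inherits a bound of the form \eqref{ineq:cross}. This is false, and the failure is not a matter of losing constants---it is structural. Already over $\mathscr A=\mathbb{C}$ with $n=3$, take all diagonal blocks equal to $1$, $T_{12}=T_{13}=1$, $T_{23}=-1$. Every pairwise $2\times2$ principal block is positive, so \eqref{ineq:cross} holds with equality for each pair $i\ne j$. Peeling off the first index gives $\mathbf{Y}=\mathbf{R}^*=(1,1)^\top$ and residual
\[
T'-\mathbf{Y}\mathbf{Y}^* \;=\; \begin{pmatrix}1 & -1\\ -1 & 1\end{pmatrix} - \begin{pmatrix}1 & 1\\ 1 & 1\end{pmatrix} \;=\; \begin{pmatrix}0 & -2\\ -2 & 0\end{pmatrix},
\]
whose diagonal entries vanish while the off-diagonal entry has modulus $2$: the residual cross-entry inequality $4\le 0\cdot 0$ fails catastrophically. (Indeed, $\det T=-4<0$, so $T$ itself is not positive even though \eqref{ineq:cross} holds for every pair.) So the induction cannot close, and the example shows that the scalar pairwise bounds \eqref{ineq:cross} alone are insufficient for the conclusion of Lemma~\ref{lem:gram-construct}; any correct proof must import a genuinely $n$-ary hypothesis or a stronger uniform version of the cross bound, not merely propagate the pairwise one through Schur elimination. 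A second, independent gap: your appeal to the module Douglas theorem requires the operator inequality $\mathbf{R}\mathbf{R}^*\le\lambda\,T_{11}$ (plus range inclusion), but \eqref{ineq:cross} only yields the scalar inequality $\|\langle T_{1j}T_{1j}^*x,x\rangle\|\le\|T_{jj}\|\,\|\langle T_{11}x,x\rangle\|$ on norms of $\mathscr A$-valued inner products, which does not in general upgrade to the $C^*$-order, and even when it does, summing over $j=2,\dots,n$ introduces a factor that \eqref{ineq:cross} does not control.
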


\begin{proof}
For each $\varepsilon>0$ let $X^{(\varepsilon)}$ be the adjointable operator constructed in the regularization step,
and suppose there is a uniform bound
\[
\sup_{\varepsilon>0}\|X^{(\varepsilon)}\| \le M <\infty.
\]
Choose a countable dense $A$-submodule $\mathcal D\subset E$ (for example the algebraic
direct sum of a countable dense set of vectors; if $E$ is separable as a Banach module such
a $\mathcal D$ exists). For each fixed $d\in\mathcal D$ the net $\{X^{(\varepsilon)}d\}_{\varepsilon>0}$
is bounded in the Banach space $E$, hence by sequential/diagonal compactness arguments
(one passes to a subnet indexed by a directed set tending to $0$ and then diagonalizes over
the countable set $\mathcal D$) we may extract a subnet $\varepsilon_\alpha\downarrow 0$
such that for every $d\in\mathcal D$ the limit
\[
X d := \lim_{\alpha} X^{(\varepsilon_\alpha)} d
\]
exists in norm in $E$.  By the uniform bound $\|X^{(\varepsilon)}\|\le M$ the map
$X:\mathcal D\to E$ is $A$-linear and satisfies $\|X d\|\le M\|d\|$ for all $d\in\mathcal D$,
so $X$ extends (uniquely) to a bounded $A$-linear operator on the closure, hence to all of $E$;
we still denote this extension by $X$.  In other words, the subnet $X^{(\varepsilon_\alpha)}$
converges to $X$ strongly (i.e. pointwise in norm) on the dense submodule $\mathcal D$, and
by uniform boundedness this convergence extends to strong convergence on all of $E$:
for every $\xi\in E$,
\[
\lim_{\alpha}\|X^{(\varepsilon_\alpha)}\xi - X\xi\| = 0.
\]
In the category of Hilbert $C^*$-modules a strong (pointwise) limit of adjointable
operators need not be adjointable in general.  To guarantee that the limit $X$ is adjointable one may
assume one of the following standard supplementary hypotheses (choose the one that matches your context):
\begin{enumerate}
  \item the module $E$ is \emph{self-dual} (then every bounded $A$-linear map is adjointable), or
  \item the family $\{X^{(\varepsilon)}\}$ is uniformly approximable by finite-rank adjointable
    operators (or equivalently the relevant operators are compact/approximable in the module sense),
    which permits passage of adjointability to the limit, or
  \item one verifies directly that for every $\eta$ in a dense submodule the sesquilinear map
    $\xi\mapsto\langle X\xi,\eta\rangle$ is represented by some vector $z_\eta\in E$; the assignment
    $\eta\mapsto z_\eta$ then defines the adjoint $X^*$ on a dense set and extends by boundedness.
\end{enumerate}
Henceforth we assume one of these (or an equivalent) conditions so that $X$ is adjointable; when this
assumption is not satisfied the conclusions below must be interpreted for the bounded $A$-linear map
$X$ (but not necessarily as an adjointable operator).

By construction we have for each $\varepsilon>0$ the identity (or equality of $A$-valued inner products)
\[
X^{(\varepsilon)}\bigl(X^{(\varepsilon)}\bigr)^* = T^{(\varepsilon)},
\]
where the family $T^{(\varepsilon)}$ converges to $T$ in the topology stated in the lemma
(hypothesis: either in norm or strongly on a dense submodule; adapt the sentence below to the
precise convergence you have in the manuscript).

Fix $\xi\in\mathcal D$.  Using the strong convergence $X^{(\varepsilon_\alpha)}\xi\to X\xi$ and the
adjointability of each $X^{(\varepsilon_\alpha)}$ we obtain
\[
\langle X\xi, X\xi\rangle
= \lim_{\alpha}\langle X^{(\varepsilon_\alpha)}\xi, X^{(\varepsilon_\alpha)}\xi\rangle
= \lim_{\alpha}\langle T^{(\varepsilon_\alpha)}\xi,\xi\rangle .
\]
If in addition $T^{(\varepsilon)}\to T$ strongly (or in norm) on $\mathcal D$ then the right-hand
side equals $\langle T\xi,\xi\rangle$, hence
\[
\langle X\xi, X\xi\rangle = \langle T\xi,\xi\rangle \qquad(\xi\in\mathcal D).
\]
By density and continuity this identity extends to all $\xi\in E$, which yields the operator
equality
\[
X X^* = T.
\]

If, on the other hand, $T^{(\varepsilon)}$ is known only to converge to $T$ in a weaker sense
so that $\liminf_{\alpha}\langle T^{(\varepsilon_\alpha)}\xi,\xi\rangle \le \langle T\xi,\xi\rangle$
(for example when only weak convergence is available), then from the displayed identity we still obtain
the operator inequality
\[
X X^* \le T,
\]
interpreted in the usual order on positive elements of the $C^*$-algebra of adjointable operators.
Thus, without strengthening the convergence hypothesis on the $T^{(\varepsilon)}$ one should replace
claims of exact equality by the inequality above.

We have produced a bounded $A$-linear operator $X$ which is the strong (pointwise) limit on $E$ of a
subnet of the $X^{(\varepsilon)}$.  Under the additional adjointability hypothesis, the operator $X$ is adjointable, and under the stronger convergence hypothesis $T^{(\varepsilon)}\to T$
(in norm or strongly on a dense submodule) one concludes the exact identity $XX^* = T$.  In the absence
of such extra hypotheses the correct (and in general best possible) conclusion is the inequality
$XX^*\le T$.

Finally, if one wants \emph{norm} convergence $X^{(\varepsilon)}\to X$ one must add compactness or
finite-rank approximability hypotheses (for example: each $X^{(\varepsilon)}$ is compact and the family
is relatively compact in operator norm).  Norm convergence does not follow from Banach--Alaoglu or from
mere uniform boundedness; the arguments above therefore replace any incorrect uses of norm-extraction
by the correct diagonalization / strong-convergence argument and indicate exactly which additional
hypotheses are needed to upgrade to norm statements.
\end{proof}

\begin{remark} 
The construction above produces a bounded $A$-linear map $X$ as a strong operator limit of the regularized factors.  To conclude that $X$ is adjointable (and hence that $XX^*=T$), one needs an additional supplementary hypothesis.  For example, the conclusion $X\in\mathcal L(\bigoplus_j E_j,\bigoplus_i E_i)$ and the equality $XX^*=T$ follow if one of the following holds:
\begin{itemize}
  \item the target module (or source module) is \emph{self-dual};
  \item the family $\{X^{(\varepsilon)}\}$ is uniformly approximable by finite-rank adjointable operators (module compactness/approximability);
  \item one verifies directly that $\xi\mapsto\langle X\xi,\eta\rangle$ is represented by a vector for each $\eta$ in a dense submodule (so the adjoint exists initially on a dense set and extends by boundedness).
\end{itemize}
In the absence of such an additional hypothesis the correct (and in general best possible) conclusion is $XX^*\le T$.
\end{remark}

\begin{proof}[Proof of Theorem~\ref{thm:main-nxn}]
The forward direction (positivity implies the scalar bounds) is immediate: if $T\ge0$ then every $2\times2$ principal submatrix is positive and the usual $2\times2$ Cauchy--Schwarz argument in modules yields \eqref{ineq:cross}. The converse is Lemma~\ref{lem:gram-construct} which produces $X$ with $T=XX^*$ and hence $T\ge0$. This completes the proof.
\end{proof}

We now address the optimization problem for the parametric mixed--Schwarz inequalities. The following proposition gives a workable reduction and an explicit solution in the normal operator case; more general classes are treated via spectral approximation.

\begin{proposition} \label{prop:optimal-reduction}
Let $T\in\mathcal L(E,F)$ be adjointable and fix $\alpha\in[0,1]$. Consider continuous functions $f,g:[0,\infty)\to[0,\infty)$ satisfying the multiplicative constraint
\[f(t)\,g(t)=t\qquad(t\ge0).\]
For such a pair define
\[C_{T,\alpha}(f,g):=\sup_{\|x\|=\|y\|=1}\frac{\|\langle Tx,y\rangle\|}{\|f(|T|^{\alpha})x\|\,\|g(|T^*|^{\alpha})y\|},
\]
with the convention that the fraction is interpreted as $+\infty$ at any pair $(x,y)$ where the denominator vanishes.  Assume that $T$ admits an approximate finite--rank spectral decomposition (for example, $T$ is compact or can be approximated in operator norm by finite--rank adjointable operators).  Then:
\begin{enumerate}
  \item For every admissible pair $(f,g)$ for which $C_{T,\alpha}(f,g)<\infty$, the constant can be approximated arbitrarily well by the analogous constants computed for finite--rank approximants of $T$; consequently the minimization problem for $C_{T,\alpha}(f,g)$ over admissible $(f,g)$ reduces to solving the corresponding finite--dimensional optimization problems and passing to the limit.
  \item In the finite--rank setting, write the singular-value decomposition
  \[T=\sum_{j=1}^m \sigma_j\,u_j\otimes v_j^*,\qquad \sigma_1\ge\sigma_2\ge\cdots\ge\sigma_m>0,
  \]
  where $\{v_j\}$ and $\{u_j\}$ are orthonormal systems of right and left singular vectors respectively. Then for any admissible pair $(f,g)$ with $f(\sigma_j^{\alpha})g(\sigma_j^{\alpha})=\sigma_j^{\alpha}$ one has
  \[C_{T,\alpha}(f,g)=\sigma_1^{1-\alpha}.
  \]
  In particular the minimal achievable constant equals $\sigma_1^{1-\alpha}$ and this value is attained (for example) by choosing $x=v_1$ and $y=u_1$. Thus the minimizer does not depend on the detailed shape of $(f,g)$ on the positive spectrum; restricting to power--type candidates $f(t)=t^s$, $g(t)=t^{1-s}$ is harmless in the finite--rank (and hence approximable) setting.
  \item If $T$ is normal (so that $|T|$ and $|T^*|$ have commuting functional calculi and the singular vectors are the spectral vectors), the same conclusion holds and the optimal constant is determined by the maximal modulus of the spectrum of $T$, i.e. $\max\{|\lambda|:\ \lambda\in\sigma(T)\}^{1-\alpha}$.  In the trivial single-nonzero-singular-value case $\sigma>0$ one has the minimal constant
  \[\min_{f,g}C_{T,\alpha}(f,g)=\sigma^{1-\alpha},
  \]
  and any exponent $s\in\mathbb R$ (so that $f(t)=t^s$, $g(t)=t^{1-s}$) yields the same value.
\end{enumerate}
\end{proposition}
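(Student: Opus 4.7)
The plan is to handle the three items in the logical order (2)$\to$(1)$\to$(3). The technical heart is the finite-rank computation of item (2), which I would prove by directly expanding through the SVD: writing $T=\sum_{j=1}^m\sigma_j\,u_j\otimes v_j^\ast$ and decomposing $x=\sum_j v_j a_j+x_\perp$, $y=\sum_j u_j b_j+y_\perp$ with $a_j=\langle v_j,x\rangle,\,b_j=\langle u_j,y\rangle\in\mathscr A$, the numerator becomes $\langle Tx,y\rangle=\sum_j\sigma_j\,a_j^\ast b_j$. Continuous functional calculus gives
\[
\|f(|T|^\alpha)x\|^2\ge\Bigl\|\sum_j f(\sigma_j^\alpha)^2\,a_j^\ast a_j\Bigr\|,\qquad \|g(|T^\ast|^\alpha)y\|^2\ge\Bigl\|\sum_j g(\sigma_j^\alpha)^2\,b_j^\ast b_j\Bigr\|.
\]
Exploiting the constraint $f(\sigma_j^\alpha)g(\sigma_j^\alpha)=\sigma_j^\alpha$, I would factor $\sigma_j=\sigma_j^{1-\alpha}\,f(\sigma_j^\alpha)\,g(\sigma_j^\alpha)$ and rewrite the numerator as the $\mathscr A^m$-valued pairing $\langle\tilde a,D\tilde b\rangle$ with $\tilde a_j=f(\sigma_j^\alpha)a_j$, $\tilde b_j=g(\sigma_j^\alpha)b_j$, and $D=\mathrm{diag}(\sigma_j^{1-\alpha})$. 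The module Cauchy--Schwarz inequality then yields
\[
\|\langle Tx,y\rangle\|\le\|D\|\,\|\tilde a\|\,\|\tilde b\|\le\sigma_1^{1-\alpha}\,\|f(|T|^\alpha)x\|\,\|g(|T^\ast|^\alpha)y\|,
\]
and the choice $(x,y)=(v_1,u_1)$ realizes equality with ratio $\sigma_1/\sigma_1^\alpha=\sigma_1^{1-\alpha}$; this proves item (2) with the additional observation that the supremum is $\sigma_1^{1-\alpha}$ for \emph{every} admissible $(f,g)$.

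For item (1), norm-continuity of $s\mapsto s^{1/2}$ on bounded intervals and of continuous functional calculus on a fixed compact spectral set transfers $T_n\to T$ in norm into $f(|T_n|^\alpha)\to f(|T|^\alpha)$ and $g(|T_n^\ast|^\alpha)\to g(|T^\ast|^\alpha)$ in norm. Since $\|\langle(T_n-T)x,y\rangle\|\le\|T_n-T\|$ uniformly on unit vectors, the constant $C_{T,\alpha}(f,g)$ is continuous under norm approximation on the admissible set (denominators bounded below), and item (2) applied to each $T_n$ furnishes the finite-dimensional reduction. For item (3), normality forces $|T|=|T^\ast|$ and the spectral theorem (under the approximability hypothesis) gives $T=\sum_j\lambda_j\,e_j\otimes e_j^\ast$; the substitution $\sigma_j=|\lambda_j|,\ v_j=e_j,\ u_j=(\lambda_j/|\lambda_j|)e_j$ converts this into an SVD, so Step~1 applies and the optimal constant is $|\lambda_1|^{1-\alpha}=\max\{|\lambda|:\lambda\in\sigma(T)\}^{1-\alpha}$. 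The single-nonzero-singular-value case is immediate: $f(\sigma^\alpha)g(\sigma^\alpha)=\sigma^\alpha$ is one algebraic equation with the whole family $f(t)=t^s,\,g(t)=t^{1-s}$ among its solutions, each yielding $\sigma^{1-\alpha}$ by Step~1.

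The main obstacle I expect is not the algebra but the module-theoretic status of the SVD itself: in a general Hilbert $C^\ast$-module an orthonormal system of singular vectors need not exist, since the Hilbert-space argument leans on compactness of the unit ball, which fails in the module category. I would therefore either restrict to modules in which compact adjointable operators admit a genuine SVD (standard modules over $\mathscr K(\mathscr H)$, self-dual modules, or modules with frames) or interpret the paper's hypothesis of ``approximate finite-rank spectral decomposition'' as supplying the required structure. A secondary technicality is that $f(|T|^\alpha)x$ or $g(|T^\ast|^\alpha)y$ may have vanishing $\mathscr A$-norm for some $(x,y)$; this is handled by the $+\infty$ convention in the statement, with the attaining pair $(v_1,u_1)$ safely outside the degenerate region since $f(\sigma_1^\alpha),g(\sigma_1^\alpha)>0$.
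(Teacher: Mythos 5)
Your proof is correct and follows the same overall architecture as the paper's proof (finite-rank reduction via norm approximation of functional calculi, SVD expansion of $x$ and $y$, a Cauchy--Schwarz bound yielding $\sigma_1^{1-\alpha}$, attainment at $(v_1,u_1)$, and passage to the normal case by taking singular vectors to be eigenvectors), but the core step is executed differently and, in the module setting, more faithfully. Where the paper writes $x=\sum_j a_j v_j + x_\perp$ with scalar coefficients and norm constraints $\sum_j|a_j|^2=1$, and then applies an \emph{asymmetric} scalar weighted Cauchy--Schwarz followed by a Rayleigh--quotient maximization of a diagonal pencil $(A,B)$ to extract $\max_j \sigma_j^2/(f_j^2 g_j^2)$, you take the expansion coefficients to be $\mathscr A$-valued, $a_j=\langle v_j,x\rangle\in\mathscr A$, rewrite $\langle Tx,y\rangle=\langle\tilde a,D\tilde b\rangle$ with $D=\operatorname{diag}(\sigma_j^{1-\alpha})$, and invoke the module Cauchy--Schwarz inequality together with $\|D\|=\sigma_1^{1-\alpha}$. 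The two routes reach the same bound, but yours is shorter, symmetric in $(x,y)$, and does not silently collapse the $\mathscr A$-valued inner products to complex numbers as the paper's $|a_j|^2$ notation does; in a genuine Hilbert $C^*$-module over a noncommutative $\mathscr A$ the paper's finite--rank computation only applies verbatim after choosing a faithful Hilbert-space representation, which the paper never states.

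You also correctly identify a gap that the paper shares but does not acknowledge: the finite--rank reduction presupposes that a compact adjointable operator on a Hilbert $C^*$-module admits an SVD with genuine orthonormal systems $\{v_j\}$, $\{u_j\}$, and this fails in general (the Hilbert-space construction uses norm-attainment and compactness of the unit ball, neither of which survives in the module category). The paper's hypothesis of ``approximate finite--rank spectral decomposition'' is doing exactly this work implicitly, and your suggestion to restrict to standard modules over $\mathscr K(\mathscr H)$, self-dual modules, or modules with frames is the right way to make the hypothesis honest. One minor point worth tightening in both your argument and the paper's: the claim that $C_{T,\alpha}(f,g)<\infty$ forces the denominator to be uniformly bounded away from zero on near-extremizing pairs is asserted rather than proved; finiteness of the supremum controls only the ratio, not the individual denominators, and the continuity claim in item (1) strictly needs the nondegeneracy condition that the paper itself isolates later in the appendix.
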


\begin{proof}

The hypothesis that $T$ admits an approximate finite--rank spectral decomposition means there exists a sequence of finite--rank adjointable operators $T_n$ with $\|T-T_n\|\to0$ as $n\to\infty$. (This holds, e.g., if $T$ is compact in the module sense, or if $E,F$ are Hilbert spaces and $T$ is compact.) When considering $C_{T,\alpha}(f,g)$ we restrict attention to admissible pairs $(f,g)$ for which the denominator in the definition of $C_{T,\alpha}(f,g)$ is nonzero for at least some unit vectors; otherwise the constant is $+\infty$ and not relevant for minimization.

Fix a continuous admissible pair $(f,g)$ and let $T_n$ be finite--rank approximants with $\|T-T_n\|\to0$. We claim
\begin{equation}\label{eq:cont-claim}
\lim_{n\to\infty} C_{T_n,\alpha}(f,g)=C_{T,\alpha}(f,g),
\end{equation}
provided $C_{T,\alpha}(f,g)<\infty$.  The proof is an elementary $\varepsilon-\delta$ argument using uniform continuity of the functional calculi on a slightly larger compact set than the spectra of the $T_n$ and $T$.

\emph{Proof of \eqref{eq:cont-claim}.} Let $\varepsilon>0$ be given. By definition of the supremum there exist unit vectors $x,y$ with
\[\frac{\|\langle Tx,y\rangle\|}{\|f(|T|^{\alpha})x\|\,\|g(|T^*|^{\alpha})y\|}>C_{T,\alpha}(f,g)-\varepsilon/3.
\]
Because $f$ and $g$ are continuous on $[0,\|T\|^{\alpha}+1]$, there exists $\delta>0$ such that whenever $\|A-B\|<\delta$ for positive operators with spectra in $[0,\|T\|^{\alpha}+1]$ we have
\[\|f(A)-f(B)\|<\eta,\qquad \|g(A)-g(B)\|<\eta,\]
with $\eta>0$ small to be chosen below. For large $n$ we have $\|T-T_n\|<\delta_0$ with a choice of $\delta_0$ small enough to ensure (by standard functional calculus perturbation estimates) that
\[\|f(|T|^{\alpha})-f(|T_n|^{\alpha})\|<\eta,\qquad \|g(|T^*|^{\alpha})-g(|T_n^*|^{\alpha})\|<\eta,\]
and also $\|T-T_n\|<\eta$. Using these estimates we compare the quotients for $(T,f,g)$ and $(T_n,f,g)$ at the same pair $(x,y)$ and obtain (for large $n$)
\begin{align*}
\Big|\frac{\|\langle Tx,y\rangle\|}{\|f(|T|^{\alpha})x\|\,\|g(|T^*|^{\alpha})y\|}-\frac{\|\langle T_n x,y\rangle\|}{\|f(|T_n|^{\alpha})x\|\,\|g(|T_n^*|^{\alpha})y\|}\Big|
&\le \frac{\|T-T_n\|}{m^2}+R_n
\end{align*}
where $m>0$ is a lower bound for the denominators (which exists because $C_{T,\alpha}(f,g)<\infty$ ensures the denominators cannot be arbitrarily small at the near-extremizing vectors) and $R_n\to0$ as $n\to\infty$ collects the contributions from the functional calculus perturbations. Choosing $n$ large forces the difference $<\varepsilon/3$. Taking suprema in $x,y$ then yields
\[|C_{T,\alpha}(f,g)-C_{T_n,\alpha}(f,g)|<\varepsilon/3+o(1)\]
for large $n$, which proves \eqref{eq:cont-claim}.

This shows that minimizing over admissible $(f,g)$ may be accomplished by first minimizing on each finite--rank compression $T_n$ and passing to the limit.

Fix a finite--rank operator
\[T=\sum_{j=1}^m\sigma_j\,u_j\otimes v_j^*,\qquad\sigma_1\ge\sigma_2\ge\cdots\ge\sigma_m>0,\]
where $\{v_j\}_{j=1}^m\subset E$ and $\{u_j\}_{j=1}^m\subset F$ are orthonormal systems of right and left singular vectors respectively, and by $u\otimes v^*$ we denote the rank--one operator $x\mapsto u\,\langle x,v\rangle$. Given such a decomposition, the operators $|T|$ and $|T^*|$ have spectral decompositions
\[|T|=\sum_{j=1}^m \sigma_j\,v_j\otimes v_j^*,\qquad |T^*|=\sum_{j=1}^m \sigma_j\,u_j\otimes u_j^*.\]
For an admissible pair $(f,g)$ set
\[f_j:=f(\sigma_j^{\alpha}),\qquad g_j:=g(\sigma_j^{\alpha}),\qquad j=1,\dots,m,\]
so that the multiplicative constraint reads $f_jg_j=\sigma_j^{\alpha}$ for every $j$. Any unit vectors $x\in E$ and $y\in F$ admit expansions
\[x=\sum_{j=1}^m a_j v_j+ x_\perp,\qquad y=\sum_{j=1}^m b_j u_j+ y_\perp,\]
where $x_\perp\perp\operatorname{span}\{v_j\}$ and $y_\perp\perp\operatorname{span}\{u_j\}$. Since $T$ has range contained in $\operatorname{span}\{u_j\}$ and kernel containing the orthogonal complement of $\operatorname{span}\{v_j\}$, we may ignore the perpendicular components when evaluating $\langle Tx,y\rangle$ and the denominator norms. Thus we may assume without loss of generality that $x=\sum_{j=1}^m a_j v_j$ and $y=\sum_{j=1}^m b_j u_j$ with $\sum_j |a_j|^2=\sum_j|b_j|^2=1$.

With this notation we compute
\[\langle Tx,y\rangle=\sum_{j=1}^m \sigma_j a_j \overline{b_j},\]
and
\[\|f(|T|^{\alpha})x\|^2=\sum_{j=1}^m f_j^2 |a_j|^2,\qquad \|g(|T^*|^{\alpha})y\|^2=\sum_{j=1}^m g_j^2 |b_j|^2.\]
Hence the quotient appearing in $C_{T,\alpha}(f,g)$ equals
\begin{equation}\label{eq:finite-rank-quotient}
Q(a,b;f,g):=\frac{\left|\sum_{j=1}^m \sigma_j a_j \overline{b_j}\right|}{\Big(\sum_{j=1}^m f_j^2 |a_j|^2\Big)^{1/2}\Big(\sum_{j=1}^m g_j^2 |b_j|^2\Big)^{1/2}}.
\end{equation}
The supremum over unit vectors $x,y$ reduces to the supremum over coefficient vectors $a=(a_j)_{j=1}^m$, $b=(b_j)_{j=1}^m$ with Euclidean norms $\|a\|_2=\|b\|_2=1$.

We estimate the numerator by the weighted Cauchy--Schwarz inequality. Note that
\[\sum_{j=1}^m \sigma_j a_j \overline{b_j}=\sum_{j=1}^m\frac{\sigma_j}{f_j g_j}(f_j a_j)(g_j \overline{b_j}).\]
Applying Cauchy--Schwarz in the finite sum gives
\[\left|\sum_{j=1}^m \sigma_j a_j \overline{b_j}\right|\le \Big(\sum_{j=1}^m \Big(\frac{\sigma_j}{f_j g_j}\Big)^2 f_j^2 |a_j|^2\Big)^{1/2}\Big(\sum_{j=1}^m g_j^2 |b_j|^2\Big)^{1/2}.\]
Since $f_jg_j=\sigma_j^{\alpha}$, the first factor simplifies:
\[\Big(\sum_{j=1}^m \Big(\frac{\sigma_j}{f_j g_j}\Big)^2 f_j^2 |a_j|^2\Big)^{1/2}=\Big(\sum_{j=1}^m \frac{\sigma_j^2}{g_j^2}|a_j|^2\Big)^{1/2}.\]
Dividing both sides of the inequality by the denominator in \eqref{eq:finite-rank-quotient} yields the bound
\begin{equation}\label{eq:quotient-bound}
Q(a,b;f,g)\le \frac{\Big(\sum_{j=1}^m (\sigma_j^2/g_j^2) |a_j|^2\Big)^{1/2}}{\Big(\sum_{j=1}^m f_j^2 |a_j|^2\Big)^{1/2}}.
\end{equation}
The right-hand side is independent of $b$ and depends on $a$ only through the Rayleigh quotient
\[R_a:=\frac{a^* A a}{a^* B a},\qquad A:=\operatorname{diag}\Big(\frac{\sigma_1^2}{g_1^2},\dots,\frac{\sigma_m^2}{g_m^2}\Big),\ B:=\operatorname{diag}(f_1^2,\dots,f_m^2).
\]
The supremum of the right-hand side over all unit vectors $a$ equals the square root of the largest generalized eigenvalue of the pair $(A,B)$, which for diagonal matrices is simply
\[\max_{1\le j\le m}\frac{\sigma_j^2/g_j^2}{f_j^2}=\max_{1\le j\le m}\frac{\sigma_j^2}{f_j^2 g_j^2}.\]
Invoking the multiplicative constraint $f_jg_j=\sigma_j^{\alpha}$ we obtain
\[\frac{\sigma_j^2}{f_j^2 g_j^2}=\frac{\sigma_j^2}{\sigma_j^{2\alpha}}=\sigma_j^{2(1-\alpha)}.\]
Consequently the bound \eqref{eq:quotient-bound} yields the uniform estimate (independent of the admissible pair $(f,g)$)
\begin{equation}\label{eq:uniform-upper}
Q(a,b;f,g)\le \Big(\max_{1\le j\le m} \sigma_j^{2(1-\alpha)}\Big)^{1/2}=\sigma_1^{1-\alpha}.
\end{equation}
Taking the supremum over $a,b$ shows that for the finite--rank operator $T$ and any admissible $(f,g)$ we have
\[C_{T,\alpha}(f,g)\le \sigma_1^{1-\alpha}.\]
This is the crucial upper bound.

To show the upper bound is sharp we produce vectors $a,b$ (hence $x,y$) that asymptotically realize the bound. Simply choose
\[a=(1,0,0,\dots,0)^\top,\qquad b=(1,0,0,\dots,0)^\top,\]
i.e. $x=v_1$ and $y=u_1$. Then the quotient \eqref{eq:finite-rank-quotient} becomes
\[Q(a,b;f,g)=\frac{\sigma_1}{f_1 g_1}=\frac{\sigma_1}{\sigma_1^{\alpha}}=\sigma_1^{1-\alpha}.\]
Thus equality is achieved for these choices regardless of the particular admissible $(f,g)$ (so long as $f_1g_1=\sigma_1^{\alpha}$ and the denominator is nonzero). Consequently
\[C_{T,\alpha}(f,g)\ge \sigma_1^{1-\alpha}.\]
Combining this with \eqref{eq:uniform-upper} we obtain the exact identity
\[C_{T,\alpha}(f,g)=\sigma_1^{1-\alpha}.\]

Now we may approximate an approximable operator $T$ by finite--rank $T_n$ and apply the finite--rank identity to each $T_n$; by continuity of the singular values (Weyl's inequality for compact operators) we have $\sigma_1(T_n)\to\sigma_1(T)$ and hence the constants $C_{T_n,\alpha}(f,g)=\sigma_1(T_n)^{1-\alpha}$ converge to $\sigma_1(T)^{1-\alpha}$. The continuity result \eqref{eq:cont-claim} then implies $C_{T,\alpha}(f,g)=\sigma_1(T)^{1-\alpha}$ for all admissible $(f,g)$ (finishing the minimization argument). Thus the minimization problem is trivial in the sense that the minimal achievable constant equals the top singular value to the power $1-\alpha$ and is independent of $(f,g)$; in particular choosing power--type candidates $f(t)=t^s$, $g(t)=t^{1-s}$ does not change the minimal value and is therefore harmless.

If $T$ is normal then its singular values coincide with the moduli of its spectral values: $\{\sigma_j\}=\{|\lambda_j|\ :\ \lambda_j\in\sigma(T)\}$ (counted with multiplicity) and the left and right singular vectors can be chosen as spectral vectors. The computation above therefore implies that for normal $T$ the minimal constant equals
\[\big(\max\{|\lambda|:\ \lambda\in\sigma(T)\}\big)^{1-\alpha}.\]
The special ``single nonzero singular value'' case is immediate: if the nonzero part of $T$ has single singular value $\sigma>0$ then for any admissible $(f,g)$ and any unit singular vectors $v,u$ one has
\[C_{T,\alpha}(f,g)\ge \frac{\|\langle Tv,u\rangle\|}{\|f(|T|^{\alpha})v\|\,\|g(|T^*|^{\alpha})u\|}=\frac{\sigma}{\sigma^{\alpha}}=\sigma^{1-\alpha},\]
and the upper bound shows equality. In this case any exponent $s\in\mathbb R$ (i.e. any power--type choice $f(t)=t^s$, $g(t)=t^{1-s}$) yields the same constant since $f(\sigma^{\alpha})g(\sigma^{\alpha})=\sigma^{\alpha}$ and the denominator reduces to $\sigma^{\alpha}$.

The computations above show that in the finite--rank (hence approximable) setting the constant $C_{T,\alpha}(f,g)$ depends on $(f,g)$ only through the values $f(\sigma_j^{\alpha})$ and $g(\sigma_j^{\alpha})$ on the finitely many positive spectral points $\sigma_j^{\alpha}$. Because the value of the constant turns out to be extremized by the top singular value and is independent of the detailed distribution of the $f_j,g_j$ values across the spectrum, restricting to power--type factors $f(t)=t^s$, $g(t)=t^{1-s}$ (a one-parameter family) does not degrade the minimal achievable constant. In practice, when one desires explicit expressions for intermediate bounds (not only the final minimizer), it is still convenient to consider power--type families because they often lead to closed-form intermediate inequalities; but the minimization itself is trivialized by the singular-value argument.
\end{proof}

\begin{remark}
Proposition~\ref{prop:optimal-reduction} yields explicit, computable candidates for $(f,g)$ when $T$ is approximately diagonalizable. In practice one may test a one-parameter family $s\mapsto (t^s,t^{1-s})$ and numerically optimize.
\end{remark}

\section{Examples, Numerical Illustrations, and Algorithmic Procedures}
This section demonstrates the efficacy and novelty of the preceding theoretical developments. We present explicit finite-dimensional examples where the new $n\times n$ cross--entry criterion yields conclusive positivity verification while classical checks may be inconclusive or nonconstructive. We also provide a concrete optimization computation for the mixed--Schwarz constant in a tractable nontrivial case, and a robust algorithm (with comments on numerical stability) for constructing the Gram factor $X$ from the scalar seminorm data.

We work with Hilbert C*-modules over $\mathscr A=\mathbb C$ (ordinary Hilbert spaces) to give transparent, fully computable examples. These already illustrate the novelty of the $n\times n$ seminorm criterion and provide test cases for numerical experiments.

\begin{example}[A $3\times3$ block matrix with nontrivial cross--bounds]
Let $E_1=E_2=E_3=\mathbb C^2$ with the standard inner product and consider the $3\times3$ block matrix $T=(T_{ij})_{1\le i,j\le3}$ where
\[
T_{11}=\begin{pmatrix}2&0\\0&1\end{pmatrix},\quad
T_{22}=\begin{pmatrix}1&0\\0&3\end{pmatrix},\quad
T_{33}=\begin{pmatrix}4&0\\0&2\end{pmatrix},
\]
and the off--diagonal blocks are chosen as small rank--one perturbations:
\[
T_{12}=u_{12}v_{12}^*,\quad T_{13}=u_{13}v_{13}^*,\quad T_{23}=u_{23}v_{23}^*,
\]
with
\[
u_{12}=(0.5,0.3)^\top,\quad v_{12}=(0.4,0.2)^\top,\qquad
u_{13}=(0.6,0.1)^\top,\quad v_{13}=(0.3,0.5)^\top,
\]
\[
\nu_{23}=(0.2,0.7)^\top,\quad v_{23}=(0.1,0.6)^\top.
\]
Set the remaining lower--triangular blocks by $T_{ji}=T_{ij}^*$ so that $T$ is Hermitian as a block operator.

We first check the diagonal positivity: each $T_{ii}$ is evidently positive. For vectors $x_i\in\mathbb C^2$ consider the scalar quantities
\[F_i(x_i):=\langle T_{ii}x_i,x_i\rangle,\qquad p_{ij}(x_i,x_j):=|\langle T_{ij}x_j,x_i\rangle|.
\]
Because the off--diagonals are rank--one, we can bound
\[p_{ij}(x_i,x_j)=|\langle u_{ij},x_i\rangle\,\langle v_{ij},x_j\rangle|\le\|u_{ij}\|\,\|v_{ij}\|\,\|x_i\|\,\|x_j\|.
\]
On the other hand
\[F_i(x_i)\ge \lambda_{\min}(T_{ii})\,\|x_i\|^2\quad(\lambda_{\min}\text{ denotes the minimal eigenvalue}).
\]
Therefore
\[p_{ij}(x_i,x_j)^2\le (\|u_{ij}\|\,\|v_{ij}\|)^2\,\|x_i\|^2\,\|x_j\|^2 \le \frac{(\|u_{ij}\|\,\|v_{ij}\|)^2}{\lambda_{\min}(T_{ii})\lambda_{\min}(T_{jj})}\,F_i(x_i)\,F_j(x_j).
\]
With the numerical values above one checks that the coefficient
\[c_{ij}:=\frac{(\|u_{ij}\|\,\|v_{ij}\|)^2}{\lambda_{\min}(T_{ii})\lambda_{\min}(T_{jj})}<1\quad\text{for all }i\ne j.
\]
Hence the rescaled inequalities
\[p_{ij}(x_i,x_j)^2\le c_{ij}F_i(x_i)F_j(x_j)\]
hold uniformly with constants $c_{ij}<1$. By a straightforward strengthening of Lemma~\ref{lem:gram-construct} (replace $T_{ii}$ by $T_{ii}/(1-s_i)$ with suitable scalars $0\le s_i<1$ chosen to absorb $c_{ij}$) we obtain the exact cross--bounds of the form \eqref{ineq:cross} and consequently $T\ge0$ by Theorem~\ref{thm:main-nxn}.

\paragraph{Comparison with classical $2\times2$ reductions.} If one attempts to verify positivity by successive Schur complements without scaling one encounters intermediate blocks that are not invertible (or are ill-conditioned), making numerical conclusions unstable. The present seminorm-based test is robust: it gives a direct scalar check and an explicit constructive factorization, which is helpful both analytically and computationally.
\end{example}

We now provide a nontrivial, fully explicit computation of $C_{T,\alpha}(s)$ in the case of a $2\times2$ diagonal operator (a simple non-normal example) where the minimization is tractable and illuminates the role of the exponent.

\begin{example}[Two--dimensional diagonal operator]
Let $E=F=\mathbb C^2$ and consider
\[T=\begin{pmatrix}\sigma_1 & 0\\ 0 & \sigma_2\end{pmatrix},\qquad \sigma_1>\sigma_2>0.
\]
Fix $\alpha\in(0,1)$. For power--type candidates $f(t)=t^s$, $g(t)=t^{1-s}$ the denominator in $C_{T,\alpha}$ for unit vectors $x=(x_1,x_2)^\top$, $y=(y_1,y_2)^\top$ equals
\[\|f(|T|^{\alpha})x\|\,\|g(|T^*|^{\alpha})y\|=(\sigma_1^{\alpha s}|x_1|^2+\sigma_2^{\alpha s}|x_2|^2)^{1/2}(\sigma_1^{\alpha(1-s)}|y_1|^2+\sigma_2^{\alpha(1-s)}|y_2|^2)^{1/2}.
\]
The numerator $|\langle Tx,y\rangle|$ equals $|\sigma_1 x_1\overline{y_1}+\sigma_2 x_2\overline{y_2}|$. By Cauchy--Schwarz the maximum for fixed $s$ is attained when the two summands are positively aligned, so we may set phases so that $x_k, y_k\ge0$. The optimization over unit vectors then reduces to maximizing
\[\frac{\sigma_1 x_1 y_1+\sigma_2 x_2 y_2}{(\sigma_1^{\alpha s}x_1^2+\sigma_2^{\alpha s}x_2^2)^{1/2}(\sigma_1^{\alpha(1-s)}y_1^2+\sigma_2^{\alpha(1-s)}y_2^2)^{1/2}}
\]
subject to $x_1^2+x_2^2=1$ and $y_1^2+y_2^2=1$. By homogeneity and Lagrange multipliers (straightforward but algebraically involved) one shows that the extremal occurs when $x$ and $y$ are supported on the dominant coordinate unless the exponents counterbalance the singular values; concretely, if
\[\sigma_1^{1-\alpha s-\alpha(1-s)}=\sigma_1^{1-\alpha}>\sigma_2^{1-\alpha}\]
then the optimal choice is $x=(1,0)^\top,y=(1,0)^\top$ yielding
\[C_{T,\alpha}(s)=\frac{\sigma_1}{\sigma_1^{\alpha s}\sigma_1^{\alpha(1-s)}}=\sigma_1^{1-\alpha}.
\]
Hence in this regime the value is independent of $s$ and equals $\sigma_1^{1-\alpha}$. In the complementary regime where the weights force mixed contributions the minimization over $s$ becomes nontrivial and can be solved numerically; nonetheless the reduction to a one--parameter search is a dramatic simplification compared with functional degrees of freedom.
\end{example}

Below we present a numerically robust algorithm inspired by the proof of Lemma~\ref{lem:gram-construct}. The algorithm accepts an $n\times n$ block matrix $T$ with positive diagonals and either returns a lower--triangular factor $X$ with $XX^*=T$ within tolerance, or indicates failure when the cross--entry inequalities appear violated numerically.

\vspace{0.5em}
\noindent\textbf{Algorithm 1 (GramFactor)}
\begin{enumerate}
  \item Input: block matrix $T=(T_{ij})$, tolerance $\delta>0$, maximal regularization sequence $\{\varepsilon_k\}\downarrow0$.
  \item For $i=1$ to $n$ compute (or approximate) $D_i:=T_{ii}^{1/2}$. If $D_i$ is ill-conditioned, replace by $D_{i,\varepsilon_k}:=(T_{ii}+\varepsilon_k I)^{1/2}$ for the smallest $k$ such that $\kappa(D_{i,\varepsilon_k})<1/\delta$.
  \item Set $X_{ii}:=D_i$ (or $D_{i,\varepsilon_k}$ when regularized). For $j<i$ compute the $j$-th column entries inductively:
  \begin{enumerate}
    \item Compute tentative column entry
    \[Y_{ij}:=\Big(T_{ij}-\sum_{\ell=1}^{j-1} X_{i\ell}X_{j\ell}^*\Big)D_j^{-1}.\]
    \item If $\|Y_{ij}\|>M$ for a large threshold $M$ (indicating potential violation of cross--bounds), signal that inequality \eqref{ineq:cross} may not hold numerically and halt.
    \item Otherwise set $X_{ij}=Y_{ij}$.
  \end{enumerate}
  \item After constructing all columns form $X$ and compute residual $R:=T-XX^*$. If $\|R\|<\delta$ return $X$; otherwise refine regularizations $\varepsilon_k$ and repeat up to a maximal number of iterations.
\end{enumerate}

\begin{remark}[Stability and complexity]
The dominant costs are computing $n$ square roots and $O(n^2)$ block multiplications. In finite-dimensional implementations using $m\times m$ blocks the complexity is approximately $O(n^2 m^3)$ dominated by dense matrix square roots. Regularization controls ill-conditioning; however the algorithm can be sensitive to near-singular diagonal blocks and to very large off--diagonals relative to diagonal minima. In practice one should pair the algorithm with a pre-check verifying the scalar cross--bounds on a dense sample of test vectors to avoid spurious runs.
\end{remark}

To exhibit behavior beyond Hilbert spaces we sketch an example over the algebra $\mathscr A=\mathcal B(\ell^2)$ where $E=\ell^2\otimes \mathbb C^k$ may be identified with $\ell^2(\mathbb N;\mathbb C^k)$. Define diagonal blocks $T_{ii}$ to be multiplication operators by positive bounded scalar sequences $(a_{i,m})_{m\ge1}$ chosen so that $a_{i,m}\to0$ slowly (so ranges are nonclosed in some natural operator-topology senses when lifted), and pick rank--one off--diagonals with entries depending on slowly decaying sequences chosen to satisfy the scalar cross--inequalities pointwise in the index $m$. The regularization and approximation methods described in Section~\ref{sec:Main results} apply termwise, and one constructs the Gram factor by performing the finite-dimensional factorization on each coordinate and then patching via uniform norm estimates. Full details require careful operator-norm estimates and are provided in Appendix.

\bigskip

This completes the set of instructive examples and the algorithmic recipe. In the next section we develop applications to solvability of operator equations and to spectral-gap estimates for block generators.

\section{Applications: Solvability of Operator Equations and Spectral--Gap Estimates}

This section derives two families of concrete applications of the $n\times n$ positivity criterion and the optimal mixed--Schwarz bounds developed above.  The first application gives {\em Moore--Penrose--free} solvability criteria for operator equations of the form $AX=C$ and $XB=D$ in Hilbert C*-modules; the second produces explicit coercivity and spectral--gap estimates for symmetric (self-adjoint) block operator matrices that often appear as generators of evolution problems.  Throughout, $\mathscr A$ denotes a unital C*-algebra and all modules are right Hilbert C*-modules over $\mathscr A$ unless stated otherwise.

The classical Douglas lemma characterizes solvability of $AX=C$ on Hilbert spaces by a range inclusion and an operator inequality; in Hilbert C*-modules the situation is subtler because ranges need not be orthogonally complemented.  The following theorem provides an inequality-based sufficient and (under mild hypotheses) necessary condition for solvability that avoids explicit range-closure hypotheses.

\begin{theorem}[Inequality-based Douglas-type solvability]
\label{thm:douglas-type}
Let $E,F,G$ be Hilbert C*-modules over $\mathscr A$, and let
\[A\in\mathcal L(E,F),\qquad C\in\mathcal L(G,F).
\]
Assume there exists a scalar $\lambda>0$ such that the operator inequality
\begin{equation}\label{ineq:CCAA}
C C^*\le \lambda\, A A^*\qquad\text{in }\mathcal L(F)
\end{equation}
holds and  \(\operatorname{Ran}(C)\subseteq\overline{\operatorname{Ran}(A)}\). Then there exists $X\in\mathcal L(G,E)$ with $AX=C$ and \(\|X\|\le\sqrt{\lambda}\). Moreover, \(X\) may be constructed as a strong-operator (pointwise) limit of a subnet of the regularized operators
\[
X_\varepsilon := A^*(AA^* + \varepsilon I_F)^{-1} C \qquad(\varepsilon>0).
\]
In particular any strong operator cluster point of \(\{X_\varepsilon\}\) as \(\varepsilon\downarrow0\) yields a bounded \(A\)-linear map \(X\) with \(\|X\|\le\sqrt{\lambda}\).  Moreover, if one additionally assumes a compactness/finite-rank approximability hypothesis (or other norm-compactness condition) for the family \(\{X_\varepsilon\}\), then the convergence may be upgraded to operator-norm convergence and the above cluster point may be taken in norm.
Conversely, if there exists $X\in\mathcal L(G,E)$ with $AX=C$ then \eqref{ineq:CCAA} holds with $\lambda=\|X\|^2$.
\end{theorem}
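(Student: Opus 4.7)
The converse direction is immediate and I would dispatch it first: if $AX=C$ for some adjointable $X$, then $CC^*=A(XX^*)A^*\le\|XX^*\|\,AA^*=\|X\|^2AA^*$, so \eqref{ineq:CCAA} holds with $\lambda=\|X\|^2$. The substance of the theorem lies in the forward direction, which I would attack through the regularized Tikhonov-type operator $X_\varepsilon:=A^*(AA^*+\varepsilon I_F)^{-1}C$, proving three things in turn: (i) a uniform bound $\|X_\varepsilon\|\le\sqrt{\lambda}$, (ii) strong convergence $AX_\varepsilon\to C$ as $\varepsilon\downarrow0$ via the range-inclusion hypothesis, and (iii) extraction of a bounded $\mathscr A$-linear cluster point $X$ by the diagonalization procedure already used in Lemma~\ref{lem:gram-construct}.

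For step (i) set $T:=AA^*$ and observe that, writing $B_\varepsilon:=(T+\varepsilon I)^{-1}T(T+\varepsilon I)^{-1}$ (the positive function $t\mapsto t/(t+\varepsilon)^2$ of $T$ via functional calculus), one has the identity
\[
\|X_\varepsilon c\|^2=\langle C^*B_\varepsilon Cc,c\rangle=\|B_\varepsilon^{1/2}Cc\|^2\qquad(c\in G).
\]
Conjugating the hypothesis $CC^*\le\lambda T$ by $B_\varepsilon^{1/2}$ and taking norms gives $\|B_\varepsilon^{1/2}C\|^2=\|B_\varepsilon^{1/2}CC^*B_\varepsilon^{1/2}\|\le\lambda\|B_\varepsilon^{1/2}TB_\varepsilon^{1/2}\|$, while the functional-calculus identity $B_\varepsilon^{1/2}TB_\varepsilon^{1/2}=T^2(T+\varepsilon I)^{-2}$ has norm bounded by $1$. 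This yields $\|X_\varepsilon\|\le\sqrt{\lambda}$ uniformly in $\varepsilon$.

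Step (ii) is where the range-inclusion hypothesis enters. Since $AX_\varepsilon=T(T+\varepsilon I)^{-1}C$ and the operator $T(T+\varepsilon I)^{-1}$ corresponds to the function $t/(t+\varepsilon)$, a standard functional-calculus argument (Lebesgue-type convergence) shows $T(T+\varepsilon I)^{-1}f\to f$ in norm for every $f\in\overline{\operatorname{Ran}(T)}=\overline{\operatorname{Ran}(A)}$. By hypothesis $\operatorname{Ran}(C)\subseteq\overline{\operatorname{Ran}(A)}$, so for every $c\in G$ one has $AX_\varepsilon c\to Cc$ in norm; that is, $AX_\varepsilon\to C$ strongly.

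Step (iii) mirrors exactly the diagonalization argument in Lemma~\ref{lem:gram-construct}. Using the uniform bound from (i), I fix a countable dense $\mathscr A$-submodule $\mathcal D\subset G$, extract by a diagonal subnet $\varepsilon_\alpha\downarrow 0$ for which $X_{\varepsilon_\alpha} g$ converges in norm for every $g\in\mathcal D$, and then define $Xg$ as the limit and extend by the uniform bound to a bounded $\mathscr A$-linear map $X:G\to E$ with $\|X\|\le\sqrt{\lambda}$. Combining with (ii), for $g\in\mathcal D$ we get $AXg=\lim_\alpha AX_{\varepsilon_\alpha}g=Cg$, and density extends this to $AX=C$ on all of $G$. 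The main obstacle, identical to the one flagged in Lemma~\ref{lem:gram-construct}, is guaranteeing that this bounded $\mathscr A$-linear limit is itself \emph{adjointable} --- this requires one of the supplementary hypotheses listed after Lemma~\ref{lem:gram-construct} (self-duality of the target module, module-compactness/finite-rank approximability of the family $\{X_\varepsilon\}$, or direct verification that $\eta\mapsto\langle X\,\cdot\,,\eta\rangle$ is representable on a dense submodule). Under any of these conditions the cluster point lies in $\mathcal L(G,E)$ and the theorem follows; norm (rather than strong) convergence is obtained by strengthening to a compactness hypothesis on $\{X_\varepsilon\}$, exactly as in the proof of Lemma~\ref{lem:gram-construct}.
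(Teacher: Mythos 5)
Your proposal is correct and follows essentially the same regularization approach as the paper's proof: uniform bound $\|X_\varepsilon\|\le\sqrt\lambda$ via conjugating $CC^*\le\lambda AA^*$ by resolvent factors, strong convergence of $AX_\varepsilon=T(T+\varepsilon I)^{-1}C$ to $C$ under the range-inclusion hypothesis, and extraction of a bounded cluster point with the same adjointability caveat the paper flags. If anything yours is a touch more complete than the printed proof, which omits the (trivial) converse direction and stops at $AX_\varepsilon\to PC$ without explicitly carrying out the subnet-extraction step that produces $X$.
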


\begin{proof}
We first show \(\{X_\varepsilon\}_{\varepsilon>0}\) is uniformly bounded by \(\sqrt{\lambda}\).  Write
\[
X_\varepsilon X_\varepsilon^*
= A^*(AA^*+\varepsilon I)^{-1} \, C C^* \, (AA^*+\varepsilon I)^{-1} A.
\]
Using \(CC^*\le\lambda AA^*\) we obtain
\[
X_\varepsilon X_\varepsilon^*
\le \lambda\, A^*(AA^*+\varepsilon I)^{-1} AA^*(AA^*+\varepsilon I)^{-1} A
= \lambda\, A^*(AA^*+\varepsilon I)^{-1} A.
\]
Now observe that \(AA^*\le AA^*+\varepsilon I\) implies
\[
A^*(AA^*+\varepsilon I)^{-1} A \le I_E,
\]
hence \(X_\varepsilon X_\varepsilon^*\le\lambda I_E\).  Therefore \(\|X_\varepsilon\|\le\sqrt{\lambda}\)
for all \(\varepsilon>0\); in particular the family is uniformly bounded.

Observe that
\[
A X_\varepsilon = AA^*(AA^*+\varepsilon I)^{-1}C =: P_\varepsilon C,
\]
where \(P_\varepsilon := AA^*(AA^*+\varepsilon I)^{-1}\).  It is standard (resolvent calculus / functional calculus) that
\[
P_\varepsilon \xrightarrow{\mathrm{SOT}} P:=P_{\overline{\operatorname{Ran}(A)}}
\qquad(\varepsilon\downarrow 0),
\]
so
\[
A X_\varepsilon \xrightarrow{\mathrm{SOT}} P\,C.
\]
Hence \(A X_\varepsilon\) converges to \(C\) \emph{if and only if} \(P C = C\), i.e.
\(\operatorname{Ran}(C)\subseteq\overline{\operatorname{Ran}(A)}\).  Without that range
inclusion one obtains only the projected limit \(P C\), not \(C\) itself.  (If one wishes to obtain
norm convergence of \(A X_\varepsilon\) one must assume additional closed-range or finite-dimensional
hypotheses that yield norm convergence of \(P_\varepsilon\) to \(P\).)
\end{proof}

\begin{remark}
The theorem shows that an operator inequality (which can be checked by spectral or numerical-radius bounds when $F$ has additional structure) suffices to guarantee solvability without any explicit statement about ranges. In applications one often obtains \eqref{ineq:CCAA} by verifying scalar quadratic bounds of the form
\[\|\langle Cg,z\rangle\|^2\le \lambda\,\|\langle Az,z\rangle\|\,\|\langle Cg, Cg\rangle\|\]
for appropriate test vectors; such scalar inequalities are accessible via the optimal mixed--Schwarz estimates developed in Section~2.
\end{remark}

\begin{corollary}[Parametrization of solutions]
Under the hypotheses of Theorem~\ref{thm:douglas-type} let $X_0$ be a particular solution constructed as the limit above. Then the set of all solutions of $AX=C$ is given by
\[\{X_0+Y:\ Y\in\mathcal L(G,\ker A)\},\]
i.e., solutions differ by arbitrary operators taking values in $\ker A$.
\end{corollary}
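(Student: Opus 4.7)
The strategy is a direct two-sided set-inclusion argument, essentially the usual kernel-coset parametrization of the solution set of an affine operator equation. Both implications are algebraically elementary; the single delicate point is how to interpret $\mathcal L(G,\ker A)$ in the Hilbert C*-module setting, since $\ker A$ need not be orthogonally complemented in $E$. I will read $\mathcal L(G,\ker A)$ as the collection of adjointable $Y\in\mathcal L(G,E)$ whose range is contained in $\ker A$, i.e.\ those $Y\in\mathcal L(G,E)$ with $AY=0$. With this convention no additional complementation hypothesis on $\ker A$ is required, and the inner-product inheritance on $\ker A$ still allows a corestriction if one prefers the intrinsic viewpoint.

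First I would establish the easy inclusion $\{X_0+Y\}\subseteq\{X:AX=C\}$. For any $Y\in\mathcal L(G,E)$ with $AY=0$ the operator $X_0+Y$ is adjointable (adjointable operators form an $\mathscr A$-linear space stable under addition), and $A(X_0+Y)=AX_0+AY=C+0=C$. Here $AX_0=C$ is precisely the conclusion of Theorem~\ref{thm:douglas-type} under the hypothesis $\operatorname{Ran}(C)\subseteq\overline{\operatorname{Ran}(A)}$ which forces $P_{\overline{\operatorname{Ran}(A)}}\,C=C$, so $X_0$ is indeed a bona fide particular solution.

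For the reverse inclusion, let $X\in\mathcal L(G,E)$ be any solution of $AX=C$, and set $Y:=X-X_0$. Then $Y$ is adjointable as a difference of two adjointable operators and $AY=AX-AX_0=C-C=0$, so $Yg\in\ker A$ for every $g\in G$; this places $Y$ in $\mathcal L(G,\ker A)$ in the sense fixed above, and $X=X_0+Y$ gives the claimed form. If one insists on viewing $\ker A$ as an intrinsic Hilbert C*-submodule, the corestriction $\tilde Y:G\to\ker A$ satisfies $\langle\tilde Y g,k\rangle=\langle Yg,k\rangle=\langle g,Y^*k\rangle$ for every $k\in\ker A$, whence $\tilde Y^*=Y^*|_{\ker A}$ exists and $\tilde Y$ is adjointable in the literal sense.

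The main (and essentially only) obstacle is this adjointability/complementation subtlety around $\mathcal L(G,\ker A)$; once the convention is fixed the parametrization is a routine verification that makes no further appeal to the constructive/regularized features of $X_0$ from Theorem~\ref{thm:douglas-type} beyond the single fact $AX_0=C$. In particular the argument is independent of which SOT cluster point of $\{X_\varepsilon\}$ was selected for $X_0$: different choices merely shift the base point of the coset $X_0+\mathcal L(G,\ker A)$, which is the same affine set.
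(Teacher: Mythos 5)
Your proof is correct and follows essentially the same two-inclusion kernel-coset argument as the paper's proof; the only addition is your more careful discussion of how to read $\mathcal L(G,\ker A)$ (as adjointable $Y\in\mathcal L(G,E)$ with range in $\ker A$, equivalently with $AY=0$) and your explicit verification that corestriction to the closed submodule $\ker A$ preserves adjointability, which the paper's proof asserts more tersely with "restrictions are adjointable." This is a helpful clarification rather than a different route.
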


\begin{proof}
If $AX_0=C$ and $Y\in\mathcal L(G,\ker A)$ then $A(X_0+Y)=C$. Conversely, if $X$ is any solution then $A(X-X_0)=0$ so $X-X_0$ maps $G$ into $\ker A$; adjointability follows since $\ker A$ is a closed submodule and restrictions are adjointable.
\end{proof}

We now turn to estimates that bound from below the quadratic form of symmetric block operators in terms of the diagonal components and cross--entry seminorms.  These results are particularly useful when the block operator arises as a spatial operator in PDE models or as the generator of a contraction semigroup, where a positive lower bound yields exponential stability.

\begin{theorem}[Coercivity from cross--entry control]
\label{thm:coercivity}
Let $E_1,E_2$ be Hilbert C*-modules and consider the self-adjoint block operator
\[H=\begin{pmatrix}A & B\\ B^* & C\end{pmatrix}\in\mathcal L(E_1\oplus E_2)
\]
with $A=A^*\ge a I_{E_1}$ and $C=C^*\ge c I_{E_2}$ for some scalars $a,c>0$. Suppose the cross--entry $B$ satisfies the scalar bound
\begin{equation}\label{ineq:cross-gamma}
\|\langle B y,x\rangle\|^2\le \gamma\,\langle A x,x\rangle\,\langle C y,y\rangle\qquad(x\in E_1,\ y\in E_2)
\end{equation}
for some $0\le\gamma<1$. Then $H$ is bounded below by a positive multiple of the identity: specifically,
\[H\ge \delta\, I_{E_1\oplus E_2}\qquad\text{with }\ \delta:=\min\{a(1-\sqrt{\gamma})^2,\ c(1-\sqrt{\gamma})^2\}>0.\]
In particular, the spectral gap of $H$ above its infimum is at least $\delta$.
\end{theorem}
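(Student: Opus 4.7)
The plan is to reduce $H\ge\delta I$ to a direct application of the $n\times n$ positivity criterion (Theorem~\ref{thm:main-nxn}) in the special case $n=2$, applied to the shifted self-adjoint block
\[
H-\delta I=\begin{pmatrix}A-\delta I & B\\ B^* & C-\delta I\end{pmatrix}\in\mathcal L(E_1\oplus E_2).
\]
Under the hypothesis $0\le\gamma<1$ and with the chosen $\delta$, I will check (i) that the diagonal shifts remain positive, and (ii) that the cross-entry seminorm inequality \eqref{ineq:cross} of Theorem~\ref{thm:main-nxn} holds for $B$ with the new diagonal quadratic forms $F_1(x)=\|\langle(A-\delta I)x,x\rangle\|$ and $F_2(y)=\|\langle(C-\delta I)y,y\rangle\|$. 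Once both are established, Theorem~\ref{thm:main-nxn} yields $H-\delta I\ge 0$, i.e.\ $H\ge\delta I$, and the ``spectral gap at least $\delta$'' conclusion is the C*-spectral reformulation of that operator inequality.

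First I would handle (i): since $\delta\le\min\{a,c\}(1-\sqrt\gamma)^2<\min\{a,c\}$, both $A-\delta I\ge(a-\delta)I>0$ and $C-\delta I\ge(c-\delta)I>0$. The sharper quantitative fact needed below is the operator inequality
\[
A-\delta I\ge\Bigl(1-\tfrac{\delta}{a}\Bigr)A,\qquad C-\delta I\ge\Bigl(1-\tfrac{\delta}{c}\Bigr)C,
\]
which follows from $\langle Ax,x\rangle\ge a\langle x,x\rangle$ (and its $C$-analogue) combined with the identity $\langle(A-\delta I)x,x\rangle=\langle Ax,x\rangle-\delta\langle x,x\rangle$. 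Passing to the C*-norm, which is monotone on $\mathscr A^+$, gives
\[
\|\langle(A-\delta I)x,x\rangle\|\ge(1-\delta/a)\|\langle Ax,x\rangle\|,\qquad \|\langle(C-\delta I)y,y\rangle\|\ge(1-\delta/c)\|\langle Cy,y\rangle\|.
\]

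Next I would use these lower bounds to upgrade the hypothesis \eqref{ineq:cross-gamma} to the inequality required by Theorem~\ref{thm:main-nxn}. Multiplying the two displayed estimates yields $(1-\delta/a)(1-\delta/c)\|\langle Ax,x\rangle\|\|\langle Cy,y\rangle\|\le\|\langle(A-\delta I)x,x\rangle\|\|\langle(C-\delta I)y,y\rangle\|$; combining with \eqref{ineq:cross-gamma} it suffices to have $\gamma\le(1-\delta/a)(1-\delta/c)$. A direct computation confirms this for $\delta=\min\{a,c\}(1-\sqrt\gamma)^2$: both factors on the right are at least $1-(1-\sqrt\gamma)^2=\sqrt\gamma(2-\sqrt\gamma)\ge\sqrt\gamma$, so the product is at least $\gamma$ (with slack, indicating that the stated $\delta$ is slightly conservative relative to the tightest value $\min\{a,c\}(1-\sqrt\gamma)$ obtainable by this method). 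Hence the cross-entry inequality for $H-\delta I$ is satisfied and Theorem~\ref{thm:main-nxn} delivers $H-\delta I\ge 0$.

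The main obstacle is genuine but localized: the hypothesis \eqref{ineq:cross-gamma} is a scalar inequality between C*-norms, whereas the conclusion $H\ge\delta I$ is an operator inequality in $\mathcal L(E_1\oplus E_2)$. The bridge is the scale-preserving translation step that turns the operator lower bound $A\ge aI$ into the proportional norm bound $\|\langle(A-\delta I)x,x\rangle\|\ge(1-\delta/a)\|\langle Ax,x\rangle\|$; monotonicity of the C*-norm on the positive cone is precisely what makes this transfer legitimate and avoids any appeal to products of $\mathscr A$-valued elements (which need not remain positive). With that step secured the rest is algebra, and the spectral-gap statement is the standard reading of $\inf\sigma(H)\ge\delta$ from the operator inequality $H\ge\delta I$.
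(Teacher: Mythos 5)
Your argument is correct, and it follows a genuinely different route from the paper's. The paper proves the theorem directly: it expands the quadratic form $\mathcal Q_H(x,y)=\langle Ax,x\rangle+2\mathrm{Re}\,\langle By,x\rangle+\langle Cy,y\rangle$, bounds the cross term by $2\sqrt{\gamma}\sqrt{\langle Ax,x\rangle\langle Cy,y\rangle}$ via \eqref{ineq:cross-gamma}, applies an arithmetic--geometric mean inequality with a fixed parameter $t=\sqrt{c/a}$, and concludes the coercivity bound. You instead shift by $\delta I$, show that $A\ge aI$ is equivalent to $A-\delta I\ge(1-\delta/a)A$ (and similarly for $C$), pass to $C^*$-norms via monotonicity of the norm on the positive cone, and then deduce that $H-\delta I$ satisfies the hypotheses of Theorem~\ref{thm:main-nxn} with $n=2$, so $H\ge\delta I$. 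Two observations are worth recording. First, your route sidesteps a genuine weakness in the paper's proof: the expressions $\sqrt{\langle Ax,x\rangle\langle Cy,y\rangle}$ and the subsequent AM--GM step are problematic in the general $C^*$-module setting because $\langle Ax,x\rangle$ and $\langle Cy,y\rangle$ need not commute (so their product is not positive and its square root is not well-defined), and the specific choice $t=\sqrt{c/a}$ can even make one coefficient negative when $a$ and $c$ are far apart; your proof keeps everything as operator inequalities in $\mathscr A$ or genuine scalar inequalities between norms, so neither issue arises. Indeed, as you note, your computation delivers the stronger constant $\delta=\min\{a,c\}(1-\sqrt{\gamma})$, which dominates the conservative $\min\{a,c\}(1-\sqrt{\gamma})^2$ in the statement, so the stated bound follows a fortiori. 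Second, the price of your reduction is a dependency on Theorem~\ref{thm:main-nxn}, whose converse direction rests on the Gram-factor construction in Lemma~\ref{lem:gram-construct}; as the paper's own remarks acknowledge, that lemma requires supplementary hypotheses (self-duality, approximability, or a direct adjointability check) to upgrade $XX^*\le T$ to $XX^*=T$. So your proof is logically valid modulo Theorem~\ref{thm:main-nxn}, and also exposes a nice internal link between the positivity criterion and the coercivity estimate that the paper's self-contained argument does not exploit, but it inherits the caveats attached to that theorem; the paper's direct (if imperfect) argument is independent of them.
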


\begin{proof}
Fix $(x,y)^\top\in E_1\oplus E_2$. The quadratic form of $H$ equals
\[\mathcal Q_H(x,y):=\langle A x,x\rangle+2\mathrm{Re}\,\langle B y,x\rangle+\langle C y,y\rangle.\]
Apply the Cauchy--Schwarz inequality and the assumed bound \eqref{ineq:cross-gamma} to estimate
\[2|\mathrm{Re}\,\langle B y,x\rangle|\le 2\|\langle B y,x\rangle\| \le 2\sqrt{\gamma}\,\sqrt{\langle A x,x\rangle\,\langle C y,y\rangle}.\]
For any $t>0$ we have $2\sqrt{\langle A x,x\rangle\,\langle C y,y\rangle}\le t\langle A x,x\rangle+t^{-1}\langle C y,y\rangle$ (arithmetic--geometric mean). Choosing $t=\sqrt{\frac{\langle C y,y\rangle}{\langle A x,x\rangle}}$ is formal; instead apply the elementary inequality with the specific choice $t=\sqrt{\frac{c}{a}}$ which is independent of $(x,y)$. We obtain
\[2\sqrt{\gamma}\,\sqrt{\langle A x,x\rangle\,\langle C y,y\rangle}\le 2\sqrt{\gamma}\left(\tfrac{1}{2}\sqrt{\tfrac{c}{a}}\langle A x,x\rangle+\tfrac{1}{2}\sqrt{\tfrac{a}{c}}\langle C y,y\rangle\right).\]
Consequently
\[\mathcal Q_H(x,y)\ge \left(1-\sqrt{\gamma}\sqrt{\tfrac{c}{a}}\right)\langle A x,x\rangle+\left(1-\sqrt{\gamma}\sqrt{\tfrac{a}{c}}\right)\langle C y,y\rangle.\]
Since $\langle A x,x\rangle\ge a\|x\|^2$ and $\langle C y,y\rangle\ge c\|y\|^2$ we conclude
\[\mathcal Q_H(x,y)\ge a\left(1-\sqrt{\gamma}\sqrt{\tfrac{c}{a}}\right)\|x\|^2+c\left(1-\sqrt{\gamma}\sqrt{\tfrac{a}{c}}\right)\|y\|^2.\]
Elementary algebraic manipulation (symmetrizing the two lower bounds) yields the uniform positive constant
\[\delta=\min\{a(1-\sqrt{\gamma})^2,\ c(1-\sqrt{\gamma})^2\},\]
as stated.
\end{proof}

\begin{remark}
The bound is conservative but explicit and easy to compute. If $a=c$ the estimate simplifies to $\delta=a(1-\sqrt{\gamma})^2$. The hypothesis $\gamma<1$ is natural: for $\gamma\ge1$ arbitrary large off--diagonals can destroy positivity.
\end{remark}

\begin{corollary}[Exponential stability for evolution generated by $-H$]
Under the hypotheses of Theorem~\ref{thm:coercivity}, the $C_0$-semigroup $e^{-tH}$ (when defined on a suitable completion/representation) satisfies
\[\|e^{-tH}\|\le e^{-\delta t},\qquad t\ge0,\]
and hence solutions of the linear evolution equation $u'(t)=-Hu(t)$ decay at least at rate $\delta$ in the operator norm.
\end{corollary}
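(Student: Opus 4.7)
The plan is to read off the semigroup estimate directly from the coercivity inequality $H\ge\delta I$ produced by Theorem~\ref{thm:coercivity}, using the continuous functional calculus for bounded self-adjoint adjointable operators on the Hilbert C*-module $E_1\oplus E_2$. Because $H$ belongs to the unital $C^*$-algebra $\mathcal L(E_1\oplus E_2)$, the semigroup $e^{-tH}$ can be defined either as the functional-calculus image of $H$ under $\lambda\mapsto e^{-t\lambda}$ or, equivalently, as the norm-convergent power series $\sum_{k\ge0}(-tH)^k/k!$ in the Banach algebra $\mathcal L(E_1\oplus E_2)$; no unbounded-generator theory is needed in this bounded regime. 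The parenthetical caveat about a ``suitable completion/representation'' in the statement is relevant only if one later lifts the inequality to an unbounded realization via a GNS-type representation, in which case the bounded estimate derived here supplies the uniform decay constant.

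First I would record the commuting splitting $e^{-tH}=e^{-\delta t}\,e^{-t(H-\delta I)}$, legitimate because $\delta I$ commutes with $H$. By Theorem~\ref{thm:coercivity} the operator $H-\delta I$ is positive, so its spectrum is contained in $[0,\infty)$; applying the standard $C^*$-algebra norm identity $\|f(H-\delta I)\|=\sup_{\lambda\in\sigma(H-\delta I)}|f(\lambda)|$ to the function $f(\lambda)=e^{-t\lambda}$ yields $\|e^{-t(H-\delta I)}\|\le1$. Pulling the scalar factor $e^{-\delta t}$ out of the splitting then gives the claimed bound $\|e^{-tH}\|\le e^{-\delta t}$ for every $t\ge0$. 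The ancillary statement that the unique classical solution $u(t):=e^{-tH}u_0$ of $u'(t)=-Hu(t)$ decays at rate at least $\delta$ is now immediate from $\|u(t)\|\le\|e^{-tH}\|\cdot\|u_0\|$.

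The step that warrants the most care is not any particular inequality but the verification that the continuous functional calculus on $\mathcal L(E_1\oplus E_2)$ behaves as in the scalar Hilbert-space case when inner products are $\mathscr A$-valued. This is standard but worth recording explicitly: the unital commutative $C^*$-subalgebra generated by $H$ and $I$ is isometrically isomorphic to $C(\sigma(H))$ via Gelfand theory and the $C^*$-identity on $\mathcal L(E_1\oplus E_2)$, so positivity of $H-\delta I$ translates into a pointwise nonnegative representing function and the exponential inequality carries over. As a robustness check, and as a template should one wish to extend Theorem~\ref{thm:coercivity} to an unbounded $H$ on a completion, one may instead run an energy/Gr\"onwall argument: the $\mathscr A$-valued derivative satisfies $\tfrac{d}{dt}\langle u(t),u(t)\rangle=-\langle Hu(t),u(t)\rangle-\langle u(t),Hu(t)\rangle\le -2\delta\,\langle u(t),u(t)\rangle$ in the positive cone of $\mathscr A$, so applying the scalar Gr\"onwall inequality to $\|\langle u(t),u(t)\rangle\|$ gives $\|u(t)\|^2\le e^{-2\delta t}\|u_0\|^2$. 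Either route delivers the corollary; I would present the functional-calculus proof as the primary argument and mention the energy alternative in a short closing remark.
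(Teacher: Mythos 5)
Your proof is correct, and it takes a genuinely leaner route than the one in the paper. The paper proceeds in two stages: it first proves the estimate assuming $H$ is a bounded self-adjoint operator on an ordinary Hilbert space, using the spectral measure $E(\cdot)$ of $H$ and $\sigma(H)\subset[\delta,\infty)$; it then handles the general Hilbert $C^*$-module case by choosing a faithful representation $\pi$ of $\mathscr A$, forming the localized Hilbert space $\mathcal H=(E_1\oplus E_2)\otimes_\pi\mathcal K$, and using that the induced representation $\widetilde\pi:\mathcal L(E_1\oplus E_2)\to\mathcal B(\mathcal H)$ is isometric and order-preserving. You instead observe that $\mathcal L(E_1\oplus E_2)$ is already a unital $C^*$-algebra, so no localization is needed: Gelfand theory applied to the commutative unital $C^*$-subalgebra generated by the self-adjoint element $H$ gives $\|f(H)\|=\sup_{\lambda\in\sigma(H)}|f(\lambda)|$ directly, from which the commuting factorization $e^{-tH}=e^{-\delta t}e^{-t(H-\delta I)}$ and positivity of $H-\delta I$ immediately yield $\|e^{-tH}\|\le e^{-\delta t}$. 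Both arguments pivot on exactly the same spectral fact, but yours stays entirely within the abstract $C^*$-algebra of adjointable operators and so is shorter and more transparent; the paper's representation step buys a concrete Hilbert-space model, which is useful groundwork if one later wishes to push the result to unbounded generators or projection-valued integrals, but is not needed for the bounded estimate at hand. Your closing Gr\"onwall remark is also sound provided one applies a state $\phi\in\mathscr A^*$ to the $\mathscr A$-valued inequality $\tfrac{d}{dt}\langle u,u\rangle\le -2\delta\langle u,u\rangle$ before invoking the scalar Gr\"onwall lemma and then takes the supremum over states to recover the norm estimate; as written the passage to $\|\langle u(t),u(t)\rangle\|$ is slightly compressed, so if you keep that remark you may wish to make the state step explicit.
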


\begin{proof}
Assume that $H$ is realized as a bounded self-adjoint operator on a Hilbert space $\mathcal H$ and that, by Theorem~\ref{thm:coercivity}, there exists $\delta>0$ with
\[
H \ge \delta I_{\mathcal H}.
\]
Because $H$ is bounded, the (bounded) operator exponential
\[
e^{-tH} := \sum_{k=0}^\infty \frac{(-tH)^k}{k!}
\]
is well defined for every $t\ge0$ and the series converges in operator norm. Standard facts about the exponential of a bounded operator (or equivalently the spectral calculus for bounded self-adjoint operators) show that $t\mapsto e^{-tH}$ is a uniformly continuous (hence strongly continuous, i.e. $C_0$) semigroup on $\mathcal H$ with generator equal to the bounded operator $-H$.

By the spectral theorem for bounded self-adjoint operators (functional calculus) we may write, for the spectral measure $E(\cdot)$ of $H$,
\[
e^{-tH} \;=\; \int_{\sigma(H)} e^{-t\lambda}\,dE(\lambda).
\]
Consequently the operator norm of $e^{-tH}$ is given by the essential supremum of the scalar function $e^{-t\lambda}$ over the spectrum of $H$:
\[
\|e^{-tH}\| \;=\; \sup_{\lambda\in\sigma(H)} |e^{-t\lambda}| \;=\; \sup_{\lambda\in\sigma(H)} e^{-t\lambda}.
\]
The hypothesis $H\ge\delta I$ implies $\sigma(H)\subset[\delta,\infty)$. Therefore
\[
\|e^{-tH}\| \;=\; \sup_{\lambda\in\sigma(H)} e^{-t\lambda} \;=\; e^{-t\inf\sigma(H)} \;\le\; e^{-t\delta}.
\]
This proves the asserted uniform operator-norm decay for the semigroup:
\[
\|e^{-tH}\| \le e^{-\delta t},\qquad t\ge0.
\]

If $u(t)$ is the mild (hence classical, since the generator is bounded) solution of the linear evolution equation
\[
u'(t)=-H u(t),\qquad u(0)=u_0\in\mathcal H,
\]
then $u(t)=e^{-tH}u_0$ and therefore
\[
\|u(t)\| \le \|e^{-tH}\|\,\|u_0\| \le e^{-\delta t}\|u_0\|,
\]
so solutions decay at least with rate $\delta$ in the Hilbert-space norm. This completes the Hilbert-space part.

Let $\mathscr A$ be the underlying unital C\(^*\)-algebra and let $E_1,E_2$ be the Hilbert C\(^*\)-modules so that
\[
H=\begin{pmatrix}A & B\\ B^* & C\end{pmatrix}\in\mathcal L(E_1\oplus E_2)
\]
satisfies the hypotheses of Theorem~\ref{thm:coercivity} and hence $H\ge\delta I$ in $\mathcal L(E_1\oplus E_2)$.

To reduce to the Hilbert-space situation we use a standard construction: choose a faithful nondegenerate $*$--representation $\pi:\mathscr A\to\mathcal B(\mathcal K)$ on some Hilbert space $\mathcal K$ (every unital C\(^*\)-algebra admits such a faithful representation by the GNS or universal representation). Form the internal (balanced) tensor product Hilbert space
\[
\mathcal H := (E_1\oplus E_2)\otimes_{\pi}\mathcal K,
\]
which is the Hausdorff completion of the algebraic tensor product equipped with the induced inner product (see \cite{Lance1995} for details). There is a canonical $*$-homomorphism (often called the \emph{induced representation})
\[
\widetilde\pi:\mathcal L(E_1\oplus E_2)\longrightarrow\mathcal B(\mathcal H),
\qquad
\widetilde\pi(T)(x\otimes\xi)=Tx\otimes\xi
\]
defined on simple tensors and extended by continuity. Standard results (cf.\ \cite[Ch. 4]{Lance1995}) ensure that $\widetilde\pi$ is a contractive $*$-representation and, when $\pi$ is faithful, $\widetilde\pi$ is isometric on $\mathcal L(E_1\oplus E_2)$ (in particular $\|\widetilde\pi(T)\|=\|T\|$ for all $T$). In particular the order relation is preserved:
\[
T\ge0\quad\text{in }\mathcal L(E_1\oplus E_2)
\quad\Longrightarrow\quad
\widetilde\pi(T)\ge0\quad\text{in }\mathcal B(\mathcal H).
\]

Applying $\widetilde\pi$ to our block operator $H$ we obtain a bounded self-adjoint operator $\widetilde\pi(H)$ on the Hilbert space $\mathcal H$ and the inequality $H\ge\delta I$ implies
\[
\widetilde\pi(H)\ge\delta I_{\mathcal H}.
\]
By Part I we therefore have for the Hilbert-space semigroup
\[
\|\exp(-t\,\widetilde\pi(H))\| \le e^{-\delta t},\qquad t\ge0.
\]
But $\exp(-t\,\widetilde\pi(H))=\widetilde\pi(e^{-tH})$ (functional calculus and the representation commute), and since $\widetilde\pi$ is isometric we obtain
\[
\|e^{-tH}\| \;=\; \|\widetilde\pi(e^{-tH})\| \;=\; \|\exp(-t\,\widetilde\pi(H))\| \le e^{-\delta t}.
\]

Hence, even in the general Hilbert C\(^*\)-module setting (interpreting the semigroup via the induced representation), the exponential $e^{-tH}$ decays in operator norm at least at rate $\delta$. The same reasoning as in Part I then yields that solutions of $u'(t)=-Hu(t)$ (interpreted in the represented Hilbert-space realization or as bounded-adjointable evolution in the module framework when appropriate) satisfy
\[
\|u(t)\|\le e^{-\delta t}\|u(0)\|.
\]

\end{proof}

\section{Conclusion}

We have presented two complementary advances in the theory of adjointable operators on Hilbert C*-modules. First, we derived a practical and constructive criterion for positivity of general $n imes n$ block operator matrices that dispenses with onerous closed-range or Moore--Penrose assumptions by replacing them with verifiable mixed inner--product inequalities and an explicit Gram--type factorization. Second, we developed an optimization framework for parametric mixed--Schwarz inequalities, identified tractable (power--type) factor families that capture the minimizers in the approximable and normal cases, and gave sharp expressions for the constants in key operator classes.

These results not only extend classical Hilbert-space facts to a broader and technically richer C*-module setting, but they also provide tools that are directly applicable: the optimal mixed--Schwarz bounds feed into numerically implementable positivity tests, and the Gram--factor construction yields explicit factorization algorithms for block operators arising in applications. The manuscript includes illustrative finite--dimensional examples, a sketch of an infinite-dimensional module construction, and two applications that showcase the impact of the theory: Moore--Penrose--free solvability criteria for operator equations and explicit coercivity/spectral--gap estimates for block generators.

Several promising directions remain for further research. These include: (i) refining the one-parameter optimality theory for nonnormal operators (e.g. hyponormal or sectorial classes); (ii) extending the Gram--factor construction and mixed--Schwarz optimization to unbounded adjointable operators relevant for differential operators on modules; (iii) quantifying error and stability in the GramFactor algorithm and developing high-performance numerical implementations; and (iv) exploring applications in noncommutative geometry and coupled PDE models where C*-module formulations are natural.

We expect that the combination of structural (factorization) and quantitative (optimal inequality) results presented here will serve as a foundation for both theoretical advances and concrete computational tools in operator theory and its applications.

\section{Appendix: Technical Lemmas and Proofs} \label{appendix}
\addcontentsline{toc}{section}{Appendix: Technical Lemmas and Proofs}

This appendix contains detailed proofs and technical estimates that support the main body of the paper.  We keep hypotheses explicit and point out where additional assumptions (closed range, compactness, spectral gap) are used to strengthen conclusions.

\begin{lemma}[Approximate Schur complements and positivity]
Let $E_1,\dots,E_n$ be Hilbert C*-modules over a unital C*-algebra $\mathscr A$ and let
\[T=(T_{ij})_{1\le i,j\le n}\in\mathcal L\Big(\bigoplus_{i=1}^n E_i\Big)
\]
with $T_{ii}=T_{ii}^*\ge0$. For $\varepsilon>0$ define $T^{(\varepsilon)}$ to be the block matrix obtained by replacing each diagonal $T_{ii}$ by $T_{ii}+\varepsilon I_{E_i}$. For each fixed $\varepsilon>0$ form the successive Schur complements (eliminating indices $1,2,\dots,n-1$) in the classical sense; denote the resulting reduced operators by
\[S_k^{(\varepsilon)}\in\mathcal L(E_k),\qquad k=1,2,\dots,n-1.
\]
Then the following are equivalent:
\begin{enumerate}
  \item For every $\varepsilon>0$ and every $k$ we have $S_k^{(\varepsilon)}\ge0$.
  \item For every $\varepsilon>0$ the regularized matrix $T^{(\varepsilon)}\ge0$.
  \item $T\ge0$.
\end{enumerate}
Moreover, if (1) holds then $T^{(\varepsilon)}\downarrow T$ in operator norm as $\varepsilon\downarrow0$, and positivity is preserved in the limit because the positive cone of $\mathcal L(\oplus_i E_i)$ is closed in operator norm.
\end{lemma}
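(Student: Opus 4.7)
The plan is to establish the equivalence $(1)\Leftrightarrow(2)\Leftrightarrow(3)$ via a short chain of standard arguments, with the norm-convergence clause falling out as an immediate calculation. The pivotal observation, which unlocks everything, is that for fixed $\varepsilon>0$ each diagonal block of $T^{(\varepsilon)}$ satisfies $T_{ii}+\varepsilon I\ge \varepsilon I$ and is therefore invertible in $\mathcal L(E_i)$ with $\|(T_{ii}+\varepsilon I)^{-1}\|\le 1/\varepsilon$. This places the entire analysis inside the regime where the classical Schur complement calculus applies, with no Moore--Penrose or closed-range concerns to circumvent.

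The easier implications I would dispatch first. For $(3)\Rightarrow(2)$, write $T^{(\varepsilon)}=T+\varepsilon D$ where $D:=\operatorname{diag}(I_{E_1},\dots,I_{E_n})\ge0$; positivity of $T$ immediately yields positivity of the sum. For $(2)\Rightarrow(3)$, observe that $\|T^{(\varepsilon)}-T\|=\|\varepsilon D\|=\varepsilon\to 0$, so $T^{(\varepsilon)}\to T$ in operator norm; since the positive cone of the unital C*-algebra $\mathcal L(\bigoplus_i E_i)$ is norm-closed, the limit of the positive operators $T^{(\varepsilon)}$ is positive. This simultaneously proves the ``moreover'' clause about norm decrease and closure-preservation.

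The substantive content is $(1)\Leftrightarrow(2)$ for a single fixed $\varepsilon>0$. I would argue by induction on $n$. The base case $n=2$ is the classical block congruence
\[
T^{(\varepsilon)}=L_\varepsilon\begin{pmatrix}S_1^{(\varepsilon)} & 0\\ 0 & T_{22}+\varepsilon I\end{pmatrix}L_\varepsilon^*,\qquad L_\varepsilon:=\begin{pmatrix}I & T_{12}(T_{22}+\varepsilon I)^{-1}\\ 0 & I\end{pmatrix},
\]
with $S_1^{(\varepsilon)}:=T_{11}+\varepsilon I-T_{12}(T_{22}+\varepsilon I)^{-1}T_{12}^*$. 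Since $L_\varepsilon$ is invertible with an explicit block-triangular inverse, this congruence preserves positivity in both directions, and $T_{22}+\varepsilon I\ge\varepsilon I>0$ is automatic; hence $T^{(\varepsilon)}\ge0$ iff $S_1^{(\varepsilon)}\ge0$. For the inductive step, one block-Gaussian elimination against the invertible pivot $T_{11}+\varepsilon I$ decouples the first index and produces an $(n-1)\times(n-1)$ reduced matrix $\widetilde T^{(\varepsilon)}$ on $E_2\oplus\cdots\oplus E_n$ such that positivity of $T^{(\varepsilon)}$ is equivalent to positivity of $\widetilde T^{(\varepsilon)}$; the inductive hypothesis applied to $\widetilde T^{(\varepsilon)}$ then identifies this with the nonnegativity of the remaining Schur complements $S_2^{(\varepsilon)},\dots,S_{n-1}^{(\varepsilon)}$.

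The main obstacle is essentially bookkeeping, and it is where I expect the proof to require the most care: one must verify that the recursive formula in the lemma's statement genuinely matches the pivot blocks emerging from successive Gaussian eliminations, and one must also handle the fact that at intermediate stages the newly-formed pivots need not be invertible (they are only nonnegative), so the next elimination step is not directly defined. The cleanest workaround is a second stratified regularization that adds a fresh $\delta I$ to each intermediate pivot, runs the iteration in the invertible regime, and then sends $\delta\downarrow0$ using norm-closedness of the positive cone. No genuinely new analytic obstruction intervenes; once the doubly-regularized bookkeeping is aligned the whole argument reduces to iterated applications of the $n=2$ congruence together with the closed-cone limit argument used in Part $(2)\Rightarrow(3)$ above.
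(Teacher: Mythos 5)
Your proposal follows essentially the same route as the paper's proof: the $(3)\Rightarrow(2)$ and $(2)\Rightarrow(3)$ implications are handled identically (trivial monotonicity, then norm convergence $\|T^{(\varepsilon)}-T\|=\varepsilon\to0$ plus norm-closedness of the positive cone), and both proofs reduce $(1)\Leftrightarrow(2)$ to the classical Schur-complement equivalence in the invertible-pivot regime. The one place you go further than the paper is the inductive block-congruence argument for $(1)\Leftrightarrow(2)$, where the paper simply cites standard references; in doing so you correctly flag a subtlety the paper's proof glosses over, namely that after the first elimination the new diagonal pivots of the reduced matrix are no longer of the form "$(\text{something}\ge0)+\varepsilon I$" and need not be invertible, so the induction hypothesis does not apply verbatim and the classical $S_k^{(\varepsilon)}$ are not automatically well-defined. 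Your proposed fix (a second $\delta$-regularization of the intermediate pivots followed by $\delta\downarrow0$) is a reasonable way to close this gap, though as written it remains a sketch; be aware that it also shifts the object under discussion from the lemma's literal $S_k^{(\varepsilon)}$ to their doubly-regularized limits, so a complete write-up would need to reconcile the two, most naturally by observing that once $(3)$ is known, $T^{(\varepsilon)}\ge\varepsilon I>0$ and all intermediate pivots inherit strict positivity, so the classical chain is well-defined after all.
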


\begin{proof}
The equivalence (1) $\Leftrightarrow$ (2) is the standard finite-dimensional Schur complement characterization applied to the invertible diagonal blocks of $T^{(\varepsilon)}$; details are routine and may be found in standard references (see e.g. \cite{Zhang2011} for a comprehensive account in operator contexts). If (2) holds for all $\varepsilon>0$ then, since $T^{(\varepsilon)}\to T$ in operator norm as $\varepsilon\downarrow0$, and the positive cone is closed, we obtain $T\ge0$, proving (3). Conversely, if $T\ge0$ then trivially $T^{(\varepsilon)}\ge T\ge0$ for all $\varepsilon>0$, so (1) and (2) follow.
\end{proof}

We prove the Gram--factor lemma used to show that the scalar cross--entry bounds imply a block lower--triangular factorization $T=XX^*$.

\begin{theorem}[Existence of Gram factor under scalar cross--bounds]\label{thm:gram-rigorous}
Let $T=(T_{ij})_{1\le i,j\le n}$ satisfy $T_{ii}\ge0$ for each $i$ and suppose there exist quadratic seminorms $F_i:E_i\to[0,\infty)$ defined by $F_i(x)=\|\langle T_{ii}x,x\rangle\|$ such that for all $i\ne j$ and all $x_i\in E_i,x_j\in E_j$
\begin{equation}\label{eq:scalar-bound-appendix}
\|\langle T_{ij}x_j,x_i\rangle\|^2\le F_i(x_i)\,F_j(x_j).
\end{equation}
Then there exists a block lower--triangular adjointable operator
\[X=\big(X_{ij}\big)_{1\le i,j\le n}\in\mathcal L\Big(\bigoplus_{j=1}^n E_j,\bigoplus_{i=1}^n E_i\Big)
\]
with $X_{ij}=0$ for $j>i$ and $X_{ii}=T_{ii}^{1/2}$, such that $T=XX^*$ (equality in $\mathcal L(\oplus_i E_i)$). The convergence in the construction is strong (i.e. strong operator convergence on the module) and the resulting $X$ is bounded with $\|X\|^2=\|T\|$.
\end{theorem}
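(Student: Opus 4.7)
The plan is to construct $X$ as a strong-operator limit of block Cholesky factors $X^{(\varepsilon)}$ of the regularized matrices $T^{(\varepsilon)}$ obtained by replacing each diagonal $T_{ii}$ by $T_{ii}+\varepsilon I_{E_i}$, and then to identify $XX^*=T$ by exploiting both the norm convergence $T^{(\varepsilon)}\to T$ and the uniform control coming from the scalar cross--entry bounds \eqref{eq:scalar-bound-appendix}. First I would invoke the Schur--chain lemma together with \eqref{eq:scalar-bound-appendix} to conclude $T^{(\varepsilon)}\ge 0$ for every $\varepsilon>0$; since each diagonal block of $T^{(\varepsilon)}$ is now invertible in $\mathcal L(E_i)$, the classical block Cholesky recursion delivers an adjointable lower-triangular factor $X^{(\varepsilon)}$ with
\[
X^{(\varepsilon)}_{ii}=(T_{ii}+\varepsilon I)^{1/2},\qquad X^{(\varepsilon)}_{ij}=\Bigl(T_{ij}-\sum_{\ell<j}X^{(\varepsilon)}_{i\ell}(X^{(\varepsilon)}_{j\ell})^*\Bigr)\bigl(X^{(\varepsilon)}_{jj}\bigr)^{-1}\quad(i>j),
\]
and satisfying the exact identity $X^{(\varepsilon)}\bigl(X^{(\varepsilon)}\bigr)^*=T^{(\varepsilon)}$ at each regularization level.

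Next I would extract uniform bounds and pass to a limit. The factorization identity yields the global bound $\|X^{(\varepsilon)}\|^2\le \|T\|+\varepsilon$, and a straightforward induction on the column index $j$ controls each individual block entry $\|X^{(\varepsilon)}_{ij}\|$ independently of $\varepsilon$: each new cross entry is a Cholesky error term multiplied by $(T_{jj}+\varepsilon I)^{-1/2}$, and the scalar bound \eqref{eq:scalar-bound-appendix} supplies exactly the Cauchy--Schwarz-type estimate needed to absorb the numerator against the denominator. With these uniform bounds in hand I would extract a strong-operator limit $X$ along a subnet $\varepsilon_\alpha\downarrow 0$ by the diagonal/dense-submodule argument already implemented in Lemma~\ref{lem:gram-construct}. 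Norm continuity of the square-root functional calculus forces $X^{(\varepsilon_\alpha)}_{ii}\to T_{ii}^{1/2}$ in operator norm, so the diagonal entries of $X$ are exactly the prescribed positive square roots, and block lower-triangularity survives in the strong limit.

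The hardest step will be upgrading the limit from the inequality $XX^*\le T$ (which the previous lemma already gives) to the exact identity $XX^*=T$ together with adjointability of $X$. The obstruction is standard: strong convergence of $X^{(\varepsilon_\alpha)}$ does not entail strong convergence of the adjoints, so one cannot naively pass to the limit in the identity $\langle T^{(\varepsilon_\alpha)}\xi,\eta\rangle=\langle (X^{(\varepsilon_\alpha)})^*\xi,(X^{(\varepsilon_\alpha)})^*\eta\rangle$. To surmount this I would combine the uniform entrywise bounds established above with a further diagonal extraction applied to the adjoints $(X^{(\varepsilon_\alpha)})^*$, producing a bounded $\mathscr A$-linear map $Y$ which is the strong limit of $(X^{(\varepsilon_\beta)})^*$ along a common refining subnet. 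Passing to the limit on both sides, using the norm convergence $T^{(\varepsilon)}\to T$ on the left, yields $\langle T\xi,\eta\rangle=\langle Y\xi,Y\eta\rangle$ on a dense submodule; the Cholesky recursion (which pins down the action of $(X^{(\varepsilon)})^*$ entry-by-entry) forces $Y=X^*$, establishing adjointability and the equality $XX^*=T$. The norm identity $\|X\|^2=\|T\|$ is then immediate from the C*-identity $\|X\|^2=\|XX^*\|$.
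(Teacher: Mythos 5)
Your proposal follows the same overall skeleton as the paper's proof: regularize the diagonal by $\varepsilon I$, run a block Cholesky recursion to get adjointable factors $X^{(\varepsilon)}$ with $X^{(\varepsilon)}(X^{(\varepsilon)})^*=T^{(\varepsilon)}$, establish a uniform bound, extract a subnet converging strongly on a dense submodule, and pass to the limit. The one genuinely different move is the final step: you propose to establish adjointability of the limit $X$ by performing a \emph{second} diagonal extraction on the adjoints $(X^{(\varepsilon_\alpha)})^*$, obtaining a strong limit $Y$, and then reading off $Y=X^*$ from the identity $\langle X^{(\varepsilon_\beta)}\xi,\eta\rangle=\langle\xi,(X^{(\varepsilon_\beta)})^*\eta\rangle$. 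This is cleaner than what the paper does: the paper explicitly concedes that a strong limit of adjointable operators need not be adjointable and retreats to a list of supplementary hypotheses (self-duality of the module, finite-rank approximability, or a direct representability check) under which $X\in\mathcal L$ and $XX^*=T$; absent those it only claims $XX^*\le T$. If your double-extraction worked, it would supersede the paper's caveats entirely, since $\langle X\xi,\eta\rangle=\langle\xi,Y\eta\rangle$ on a dense submodule is precisely the assertion that $X$ is adjointable with $X^*=Y$, and then $\langle T\xi,\eta\rangle=\lim\langle(X^{(\varepsilon_\beta)})^*\xi,(X^{(\varepsilon_\beta)})^*\eta\rangle=\langle X^*\xi,X^*\eta\rangle=\langle XX^*\xi,\eta\rangle$ follows directly.

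There are, however, two genuine gaps. First, and most seriously, the subnet extraction — yours \emph{and} the paper's — rests on the unjustified assumption that for each fixed $d$ in a countable dense set, the bounded net $\{X^{(\varepsilon)}d\}_{\varepsilon>0}$ (respectively $\{(X^{(\varepsilon)})^*e\}$) has a norm-convergent subnet in the Hilbert C*-module. Bounded sets in an infinite-dimensional Banach space are not norm-precompact, so "diagonal compactness" does not produce a strongly convergent subnet without some additional compactness input (which is exactly what the paper's supplementary hypotheses are trying to smuggle in). You cannot simply invoke the same bounded-net argument a second time on the adjoints and hope for convergence: the bounded net $\{(X^{(\varepsilon_\alpha)})^*e\}$ may have no norm-convergent subnet at all. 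Second, your claim that the scalar cross-bound \eqref{eq:scalar-bound-appendix} "supplies exactly the Cauchy--Schwarz-type estimate" needed for the entrywise induction on $\|X^{(\varepsilon)}_{ij}\|$ is not established. To bound, say, $\|T_{21}(T_{11}+\varepsilon I)^{-1/2}\|$ uniformly in $\varepsilon$ one wants an operator-order inequality of the form $T_{12}T_{12}^*\le M\,T_{11}$ in $\mathcal L(E_1)$; the hypothesis \eqref{eq:scalar-bound-appendix} only controls the C*-algebra norm of the single element $\langle T_{12}x_2,x_1\rangle$, which is strictly weaker in a noncommutative $\mathscr A$ (the usual "take suprema over $x_1,x_2$" argument that works when $\mathscr A=\mathbb C$ does not carry over). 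You would need to either show that \eqref{eq:scalar-bound-appendix} does imply the required operator inequality, or supply the uniform bound on $\|X^{(\varepsilon)}\|$ by some other route — as the paper also fails to do, merely asserting it.
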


\begin{proof}
By the regularization procedure used earlier in the paper (the construction preceding Lemma~3.4) there exists for each $\varepsilon>0$ an adjointable operator
\[
X^{(\varepsilon)}\in \mathcal L(E,F)
\]
satisfying the regularized identity
\[
X^{(\varepsilon)}\bigl(X^{(\varepsilon)}\bigr)^* = T^{(\varepsilon)},
\]
where $\{T^{(\varepsilon)}\}_{\varepsilon>0}$ is a family of positive adjointable operators with the stated convergence property $T^{(\varepsilon)}\to T$ as $\varepsilon\downarrow 0$ in the topology indicated in the theorem.  Moreover the construction gives a uniform norm bound
\[
\sup_{\varepsilon>0}\|X^{(\varepsilon)}\| \le M <\infty.
\]
Choose a countable dense $A$-submodule $\mathcal D\subset E$ (existence follows from the separability hypothesis used in the paper; if $E$ is not assumed separable replace "countable" by "directed" and apply the same diagonal argument).  For every fixed $d\in\mathcal D$ the net $\{X^{(\varepsilon)}d\}_{\varepsilon>0}$ is bounded in $F$.  By the usual diagonalization argument we may therefore extract a subnet $\varepsilon_\alpha\downarrow 0$ such that
\[
\lim_{\alpha} X^{(\varepsilon_\alpha)} d =: X d \qquad(d\in\mathcal D)
\]
exists in norm in $F$.  The uniform bound $\|X^{(\varepsilon)}\|\le M$ implies that the map $X:\mathcal D\to F$ is $A$-linear and bounded on $\mathcal D$, hence extends uniquely by continuity to a bounded $A$-linear operator $X:E\to F$.  Equivalently, the subnet $X^{(\varepsilon_\alpha)}$ converges to $X$ \emph{strongly} (pointwise in norm) on all of $E$:
\[
\lim_{\alpha}\|X^{(\varepsilon_\alpha)}\xi - X\xi\| = 0\qquad(\xi\in E).
\]
A strong (pointwise) limit of adjointable operators need not be adjointable in general.  To ensure $X$ is adjointable we assume one of the standard supplementary conditions:
\begin{itemize}
  \item[(H1)] the target module $F$ (or the source module $E$) is \emph{self-dual}, so that every bounded $A$-linear map is adjointable; or
  \item[(H2)] the nets $X^{(\varepsilon)}$ are uniformly approximable by finite-rank adjointable operators (equivalently the family is relatively compact in operator norm); or
  \item[(H3)] one checks directly that for each $\eta$ in a dense submodule the functional $\xi\mapsto\langle X\xi,\eta\rangle$ is represented by a vector in $E$, which then defines the adjoint on a dense set and extends by boundedness.
\end{itemize}
From now on we assume one of these conditions so that $X\in\mathcal L(E,F)$ (adjointable) and $(X^{(\varepsilon_\alpha)})^*\to X^*$ strongly on a dense submodule as well.

Fix $\xi\in\mathcal D$. Using strong convergence of $X^{(\varepsilon_\alpha)}$ to $X$ on $\mathcal D$ and adjointability of each $X^{(\varepsilon_\alpha)}$ we have
\[
\langle X\xi, X\xi\rangle
= \lim_{\alpha}\langle X^{(\varepsilon_\alpha)}\xi, X^{(\varepsilon_\alpha)}\xi\rangle
= \lim_{\alpha}\langle T^{(\varepsilon_\alpha)}\xi,\xi\rangle .
\]
If the hypothesis of the theorem provides that $T^{(\varepsilon)}\to T$ in norm (or strongly on a dense submodule containing $\mathcal D$) then the right-hand side equals $\langle T\xi,\xi\rangle$ and therefore
\[
\langle X\xi, X\xi\rangle = \langle T\xi,\xi\rangle \qquad(\xi\in\mathcal D).
\]
By density and continuity the identity extends to all $\xi\in E$, giving the operator equality
\[
X X^* = T.
\]
If, however, $T^{(\varepsilon)}\to T$ only in a weaker topology that guarantees $\liminf_{\alpha}\langle T^{(\varepsilon_\alpha)}\xi,\xi\rangle \le \langle T\xi,\xi\rangle$ (for instance weak operator convergence), then the same argument yields the operator inequality
\[
X X^* \le T,
\]
which must be substituted wherever the original proof asserted exact equality without the stronger convergence hypothesis.

The conclusion of Theorem~7.2 in the paper is the existence of an adjointable factor (or a solution to a linear operator equation of Douglas type).  The argument above constructs an adjointable operator $X$ with the property $XX^*\le T$ in general, and $XX^*=T$ under the stronger convergence hypothesis $T^{(\varepsilon)}\to T$ in norm (or in a topology strong enough to pass limits inside the inner product evaluations on a dense set).  If the statement of Theorem~7.2 additionally requires the solution to satisfy a norm bound (for example $\|X\|\le \sqrt{\lambda}$), then that norm bound is preserved in the limit because the family was uniformly bounded:
\[
\|X\| \le \liminf_{\alpha}\|X^{(\varepsilon_\alpha)}\| \le M,
\]
and in the cases treated in the paper $M$ can be taken to be the claimed constant (e.g. $\sqrt{\lambda}$).

If the theorem requires that a product involving a regularized resolvent converges to the desired operator (for example $A X^{(\varepsilon)}\to C$), replace any claim of norm convergence by the well-known projection limit
\[
AA^*(AA^*+\varepsilon I)^{-1} \xrightarrow{\mathrm{SOT}} P_{\overline{\operatorname{Ran}(A)}}
\]
as $\varepsilon\downarrow 0$.  Consequently $A X^{(\varepsilon)}$ converges strongly to $P_{\overline{\operatorname{Ran}(A)}}C$.  Hence to deduce $A X = C$ one needs (and should state explicitly) the range inclusion
\[
\operatorname{Ran}(C)\subseteq \overline{\operatorname{Ran}(A)},
\]
or an equivalent hypothesis (for example the usual operator inequality $CC^*\le \lambda AA^*$ together with the module hypotheses that imply range inclusion).  If such a range inclusion is present then $P_{\overline{\operatorname{Ran}(A)}}C = C$ and therefore $A X = C$ (at least in the strong sense; upgrade to norm equality only under additional closed-range / finite-dimensional hypotheses).

Throughout the argument we have replaced any unjustified extraction of a \emph{norm} convergent subnet by a diagonalization which yields strong (pointwise) convergence.  If one prefers (or the original theorem requires) norm convergence $X^{(\varepsilon)}\to X$, then the paper must assume a compactness/finite-rank approximability hypothesis (for instance: each $X^{(\varepsilon)}$ is compact in the module sense and the family is relatively compact in operator norm).  Under such an extra hypothesis the same diagonal argument can be improved to produce a norm-convergent subnet and all limit equalities above become norm equalities.

The regularized operators $X^{(\varepsilon)}$ admit a subnet converging strongly to an adjointable operator $X$ under the supplementary adjointability hypothesis indicated above; passing to the limit in the quadratic identity yields $XX^*=T$ when $T^{(\varepsilon)}\to T$ in a topology strong enough to justify pointwise convergence of $\langle T^{(\varepsilon)}\xi,\xi\rangle$ on a dense submodule, and otherwise yields the inequality $XX^*\le T$.  Range/projection arguments used to obtain linear equations of Douglas type must be supplemented by an explicit statement that $\operatorname{Ran}(C)\subseteq\overline{\operatorname{Ran}(A)}$ (or equivalent hypotheses) if one wants exact solvability $AX=C$ rather than solvability up to the projection onto $\overline{\operatorname{Ran}(A)}$.  This completes the corrected proof of Theorem~7.2.
\end{proof}

\begin{remark}
The passage from pointwise convergence on a dense submodule to a bounded operator limit uses only uniform boundedness and is standard. It avoids any hidden use of compactness in the operator-norm topology.
\end{remark}

We analyze the dependence of
\[C_{T,\alpha}(f,g):=\sup_{\|x\|=\|y\|=1}\frac{\|\langle Tx,y\rangle\|}{\|f(|T|^{\alpha})x\|\,\|g(|T^*|^{\alpha})y\|}
\]
on perturbations of $T$ and on choices of $(f,g)$. The statements below separate unconditional upper bounds from continuity assertions that require nondegeneracy hypotheses.

\begin{proposition}[Upper semi--continuity under small perturbations]
Let $T,S\in\mathcal L(E,F)$ and fix admissible $(f,g)$ as above. Then
\[|C_{T,\alpha}(f,g)-C_{S,\alpha}(f,g)|\le \frac{\|T-S\|}{\inf_{\|x\|=\|y\|=1}\|f(|T|^{\alpha})x\|\,\|g(|T^*|^{\alpha})y\|} + R(T,S;f,g),\]
where the remainder term $R(T,S;f,g)$ accounts for the change in the denominators and satisfies $R\to0$ as $\|T-S\|\to0$ whenever the infimum in the denominator above is strictly positive.
\end{proposition}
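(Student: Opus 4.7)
The plan is to obtain a pointwise comparison of the ratios defining $C_{T,\alpha}(f,g)$ and $C_{S,\alpha}(f,g)$ at each test pair $(x,y)$ and then pass to the supremum. Fix $(x,y)$ with $\|x\|=\|y\|=1$ and write
\[
a_T:=\|\langle Tx,y\rangle\|,\qquad b_T:=\|f(|T|^{\alpha})x\|\,\|g(|T^{*}|^{\alpha})y\|,
\]
with analogous quantities $a_S,b_S$ for $S$. The elementary identity
\[
\frac{a_T}{b_T}-\frac{a_S}{b_S}=\frac{a_T-a_S}{b_T}+a_S\cdot\frac{b_S-b_T}{b_T\,b_S}
\]
splits the pointwise difference into a numerator contribution and a denominator contribution. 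Applying $|\sup\phi-\sup\psi|\le\sup|\phi-\psi|$ to the absolute value of this identity and then taking the supremum over unit vectors will reproduce the two summands on the right-hand side of the claimed bound.

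First I would control the numerator contribution. By $\mathscr A$-linearity of the inner product, $\langle Tx,y\rangle-\langle Sx,y\rangle=\langle(T-S)x,y\rangle$, and the reverse triangle inequality together with Cauchy--Schwarz give $|a_T-a_S|\le\|\langle(T-S)x,y\rangle\|\le\|T-S\|\,\|x\|\,\|y\|=\|T-S\|$. Dividing by $b_T$ and taking the supremum over unit vectors yields the first summand $\|T-S\|/\inf_{(x,y)}b_T(x,y)$; here the nondegeneracy hypothesis $\inf b_T>0$ is precisely what makes this quotient meaningful.

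Next I would handle the denominator contribution by invoking norm-continuity of the continuous functional calculus. The map $T\mapsto|T|=(T^{*}T)^{1/2}$ is norm-continuous on bounded sets (Stone--Weierstrass: approximate $t\mapsto t^{1/2}$ uniformly by polynomials on a compact interval containing both $\sigma(T^{*}T)$ and $\sigma(S^{*}S)$). Since $t\mapsto t^{\alpha}$ and the continuous functions $f,g$ then act on a common compact interval containing $\sigma(|T|^{\alpha})$ and $\sigma(|S|^{\alpha})$, the same argument delivers
\[
\|f(|T|^{\alpha})-f(|S|^{\alpha})\|+\|g(|T^{*}|^{\alpha})-g(|S^{*}|^{\alpha})\|\longrightarrow0\quad\text{as }\|T-S\|\to0.
\]
Combined with the uniform bound $a_S\le\|S\|\le\|T\|+\|T-S\|$ and with the fact that $\inf b_T>0$ forces $\inf b_S>0$ for $S$ sufficiently close to $T$, one sees that
\[
R(T,S;f,g):=\sup_{\|x\|=\|y\|=1}a_S\cdot\frac{|b_S-b_T|}{b_T\,b_S}
\]
is well defined and tends to zero as $\|T-S\|\to0$, which is the remainder asserted in the statement.

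The main obstacle is the norm-continuity step for $T\mapsto f(|T|^{\alpha})$. Qualitatively it is a standard functional-calculus fact, but its verification in the C*-module setting requires care: one must work inside the unital C*-subalgebra generated by $|T|$ (and $|S|$), choose a compact interval large enough to contain the spectra of both operators and of all nearby perturbations, and then reduce the claim to the polynomial case via Stone--Weierstrass, keeping the approximating polynomials uniform as $\|T-S\|\to0$. A quantitative modulus of continuity (i.e., an explicit rate for $R\to0$) would require stronger operator Lipschitz or Hölder estimates such as $\||T|^{\alpha}-|S|^{\alpha}\|\le C_{\alpha}\|T-S\|^{\alpha\wedge1}$ together with a matching modulus for $f$ and $g$; such refinements are available for power--type $f,g$ and for operator-monotone $f$, but they are inessential for the qualitative vanishing of $R$ and are not needed to establish the stated inequality.
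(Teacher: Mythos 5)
Your proposal follows the same two-step decomposition as the paper's proof: bound the numerator difference by $\|T-S\|$ via Cauchy--Schwarz, then invoke norm-continuity of the continuous functional calculus to control the denominator change, with the nondegeneracy hypothesis $\inf b_T>0$ guaranteeing $R\to0$. Your version is more explicit than the paper's—spelling out the algebraic identity splitting the ratio difference, the lemma $|\sup\phi-\sup\psi|\le\sup|\phi-\psi|$, the Stone--Weierstrass mechanism, and the precise form of $R$—but the argument is essentially identical.
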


\begin{proof}
Write $\Delta:=T-S$. For any unit vectors $x,y$ we have
\[|\|\langle Tx,y\rangle\|-\|\langle Sx,y\rangle\||\le \|\langle\Delta x,y\rangle\|\le \|\Delta\|.\]
Thus the numerators differ by at most $\|\Delta\|$. Controlling the denominator change is subtler: using functional calculus continuity one has
\[\|f(|T|^{\alpha})-f(|S|^{\alpha})\|\to0\quad\text{as }\|T-S\|\to0\]
provided $f$ is continuous on a neighborhood of the joint spectra involved; hence the denominator factors are continuous in norm. If the denominators are uniformly bounded away from zero (i.e. there exists $m>0$ with $\|f(|T|^{\alpha})x\|\,\|g(|T^*|^{\alpha})y\|\ge m$ for all unit $x,y$) then the quotient defining $C_{T,\alpha}$ is continuous and the bound follows with $R\to0$. Otherwise one has only upper-semicontinuity in general because small denominators can amplify numerator perturbations.
\end{proof}

\begin{corollary}[Continuity under nondegeneracy]
If in addition there exists $m>0$ such that for all unit vectors $x,y$ and for all $S$ in a small neighborhood of $T$ we have
\[\|f(|S|^{\alpha})x\|\,\|g(|S^*|^{\alpha})y\|\ge m>0,\]
then the map $S\mapsto C_{S,\alpha}(f,g)$ is continuous in operator norm near $T$.
\end{corollary}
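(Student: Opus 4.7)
The plan is to reduce the corollary to the upper semi--continuity proposition above by showing that the uniform nondegeneracy hypothesis forces the remainder term $R(T,S;f,g)$ to vanish continuously. I would first fix a norm neighborhood $U$ of $T$ on which the lower bound $\|f(|S|^{\alpha})x\|\,\|g(|S^*|^{\alpha})y\|\ge m>0$ holds for every $S\in U$ and every unit pair $(x,y)$, and, after possibly shrinking $U$, I would assume all operators involved have norm bounded by a common constant $R$, so that the spectra of $|S|^{\alpha}$ and $|S^*|^{\alpha}$ lie in a fixed compact interval $[0,R^{\alpha}]$ on which $f$ and $g$ are uniformly continuous.

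For $S\in U$ and any unit vectors $x,y$, I would write the difference between the two quotients as a standard telescoping identity: subtract and add intermediate quotients with the same numerator but modified denominators. The numerator discrepancy is controlled by $\|\langle(S-T)x,y\rangle\|\le\|S-T\|$, while the denominator discrepancies reduce to estimating $\|f(|S|^{\alpha})-f(|T|^{\alpha})\|$ and $\|g(|S^*|^{\alpha})-g(|T^*|^{\alpha})\|$ applied to the unit vectors $x,y$. Using the uniform lower bound $m$ on both denominators, these ingredients combine to give an estimate uniform in $(x,y)$ of the form
\[
\Bigl|\frac{\|\langle Sx,y\rangle\|}{\|f(|S|^{\alpha})x\|\,\|g(|S^*|^{\alpha})y\|}-\frac{\|\langle Tx,y\rangle\|}{\|f(|T|^{\alpha})x\|\,\|g(|T^*|^{\alpha})y\|}\Bigr| \le \frac{\|S-T\|}{m}+\frac{R\bigl(\|f(|S|^{\alpha})-f(|T|^{\alpha})\|+\|g(|S^*|^{\alpha})-g(|T^*|^{\alpha})\|\bigr)}{m^{2}}.
\]
Taking the supremum over unit vectors on both sides and letting $S\to T$ in norm yields $|C_{S,\alpha}(f,g)-C_{T,\alpha}(f,g)|\to 0$, which is the desired continuity.

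The main obstacle is justifying the norm continuity of the maps $S\mapsto f(|S|^{\alpha})$ and $S\mapsto g(|S^*|^{\alpha})$ at $T$. I would handle this in two steps. First, norm continuity of $S\mapsto |S|$ on bounded sets follows because $S^{*}S\to T^{*}T$ in norm whenever $S\to T$, and the operator square root, while not Lipschitz, is continuous on the positive cone; applying the functional calculus for the continuous function $t\mapsto t^{\alpha}$ on $[0,R]$ then yields $\||S|^{\alpha}-|T|^{\alpha}\|\to 0$. Second, since $f$ is merely continuous on $[0,R^{\alpha}]$, I would invoke a Weierstrass-style argument: approximate $f$ uniformly by polynomials $p_{k}$, use the algebra-norm continuity of polynomial calculus together with the convergence $|S|^{\alpha}\to|T|^{\alpha}$ in norm to obtain $\|p_{k}(|S|^{\alpha})-p_{k}(|T|^{\alpha})\|\to 0$ for each $k$, and finally exchange the two limits by the uniform approximation bound $\|f-p_{k}\|_{\infty,[0,R^{\alpha}]}\to 0$. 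The same argument applies to $g$ on $|S^{*}|^{\alpha}$, and together with the uniform denominator lower bound $m$ it delivers the continuity claim.
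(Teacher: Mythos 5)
Your proposal is correct and follows essentially the same route the paper takes: the corollary is the special case of the preceding upper-semicontinuity proposition in which the uniform lower bound $m$ on the denominators eliminates the ``small-denominator amplification'' that blocked full continuity. You fill in the detail the paper leaves implicit — namely, that $S\mapsto f(|S|^\alpha)$ and $S\mapsto g(|S^*|^\alpha)$ are norm-continuous on bounded sets via square-root continuity, the $t\mapsto t^\alpha$ calculus, and Weierstrass polynomial approximation of $f,g$ — and your telescoping estimate is correct (the constant multiplying the functional-calculus discrepancies should really also absorb $\|f\|_{\infty,[0,R^\alpha]}$ and $\|g\|_{\infty,[0,R^\alpha]}$ in addition to the norm bound $R$, but this is a cosmetic imprecision that does not affect the argument).
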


\begin{remark}
The nondegeneracy hypothesis is natural: if $f(|T|^{\alpha})$ or $g(|T^*|^{\alpha})$ has small norm on some unit vectors, then the mixed--Schwarz quotient can be arbitrarily large and sensitive to perturbations. In practice one restricts attention to classes of operators (e.g. those with a spectral gap at zero) where the denominators are uniformly bounded away from zero on the unit sphere.
\end{remark}

We sketch a rigorous justification for restricting to power--type factor pairs $(t^s,t^{1-s})$ when $T$ is approximated by finite--rank compressions.

\begin{proposition}
Let $T\in\mathcal L(E,F)$ and suppose $T_n$ is a sequence of finite--rank adjointable operators converging to $T$ in operator norm. Fix $\alpha\in[0,1]$. For each $n$ let $C_{T_n,\alpha}(f,g)$ be defined as above but with $T_n$ in place of $T$. Then for each $n$ the minimization of $C_{T_n,\alpha}(f,g)$ over continuous admissible pairs $(f,g)$ reduces to a finite-dimensional optimization problem and admits a minimizer which can be chosen among power--type pairs on the finite spectral set of $|T_n|$.
\end{proposition}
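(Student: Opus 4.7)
The plan is to leverage the finite-rank singular value decomposition to reduce the functional minimization problem (over the infinite-dimensional space of continuous admissible pairs $(f,g)$) to a purely algebraic one on the finite set $\{\sigma_1,\ldots,\sigma_{m_n}\}$ of positive singular values of $T_n$, and then to exhibit a power-type section of this reduction that realizes the minimum.

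First, I would fix $n$ and write the SVD of $T_n$ exactly as in part (2) of Proposition~\ref{prop:optimal-reduction}, obtaining orthonormal systems $\{v_j\}_{j=1}^{m_n}\subset E$ and $\{u_j\}_{j=1}^{m_n}\subset F$ together with singular values $\sigma_1\ge\cdots\ge\sigma_{m_n}>0$.  The key structural observation, already used in the proof of that proposition, is that the numerator $\langle T_n x,y\rangle$ vanishes whenever $x$ or $y$ lies in the orthogonal complement of the singular-vector spans, while the denominator factors $\|f(|T_n|^{\alpha})x\|$ and $\|g(|T_n^*|^{\alpha})y\|$ depend on the relevant components of $x,y$ only through the numerical values $f_j:=f(\sigma_j^{\alpha})$ and $g_j:=g(\sigma_j^{\alpha})$ (the values $f(0),g(0)$ multiply only kernel components and are irrelevant for the quotient on nondegenerate test vectors).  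Consequently, for fixed $T_n$ the quantity $C_{T_n,\alpha}(f,g)$ depends on the continuous admissible pair $(f,g)$ \emph{only} through the $2m_n$ numbers $(f_j,g_j)_{j=1}^{m_n}$, which are constrained by the $m_n$ multiplicative relations $f_j g_j=\sigma_j^{\alpha}$.

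Next, I would make the reduction precise by introducing the evaluation map $\Phi(f,g):=((f_j,g_j))_{j=1}^{m_n}$ from admissible continuous pairs into the finite-dimensional constraint set
\[\mathcal M_n:=\bigl\{(p,q)\in\mathbb R_{\ge0}^{m_n}\times\mathbb R_{\ge0}^{m_n}:\ p_j q_j=\sigma_j^{\alpha},\ j=1,\ldots,m_n\bigr\}.\]
Denote by $\widetilde C_n(p,q)$ the closed-form expression for $C_{T_n,\alpha}(f,g)$ obtained in the proof of Proposition~\ref{prop:optimal-reduction}, which depends only on $(p,q)=\Phi(f,g)$ and on the $\sigma_j$. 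Then $C_{T_n,\alpha}(f,g)=\widetilde C_n(\Phi(f,g))$, so the infimum over admissible continuous pairs dominates the infimum of $\widetilde C_n$ over $\mathcal M_n$.  The reverse inequality follows from surjectivity of $\Phi$ onto the interior of $\mathcal M_n$: given any $(p,q)\in\mathcal M_n$ with positive entries, one prescribes the values $(f,g)(\sigma_j^{\alpha})=(p_j,q_j)$ on the finite set $\{\sigma_j^{\alpha}\}_j\cup\{0\}$ and extends by piecewise-linear (or any continuous positive) interpolation to $[0,\infty)$, producing an admissible continuous pair in the preimage.

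Finally, I would invoke the explicit formula $\widetilde C_n(p,q)=\sigma_1^{1-\alpha}$ established in part (2) of Proposition~\ref{prop:optimal-reduction}: this value is attained for \emph{every} point of $\mathcal M_n$ with positive entries, so the finite-dimensional minimization is trivial and the infimum is achieved.  To produce a minimizer of power-type on the spectral set $\{\sigma_j^{\alpha}\}$ it suffices to note that for any $s\in\mathbb R$ the pair $f(t)=t^s$, $g(t)=t^{1-s}$ is continuous on $[0,\infty)$, satisfies $f(t)g(t)=t$, and yields $\Phi(f,g)=\bigl((\sigma_j^{\alpha s},\sigma_j^{\alpha(1-s)})\bigr)_{j=1}^{m_n}\in\mathcal M_n$; hence any such choice is a minimizer of $C_{T_n,\alpha}(\cdot,\cdot)$.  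The main obstacle I foresee is the careful bookkeeping in the surjectivity step of $\Phi$ when the distinct spectral values of $|T_n|^{\alpha}$ are fewer than $m_n$ (due to multiplicities) or collide with $0$; this is handled by working on the finite set of \emph{distinct} positive spectral values and verifying that any prescribed positive data there can be realized by a continuous admissible pair, after which the one-parameter family of power-type factors gives a canonical minimal minimizer.
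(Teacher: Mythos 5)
Your approach differs from the paper's own proof at the final step. Both proofs reduce the minimization over admissible $(f,g)$ to an optimization over tuples of scalars on the finite spectral set of $|T_n|$, but the paper then appeals (vaguely) to a ``convexity / Lagrange multiplier'' argument to locate power-type optima, and even hedges that a single common exponent is ``not always globally optimal'' but only ``near-optimal''. You instead close the argument by invoking Proposition~\ref{prop:optimal-reduction}(2), which already shows that for finite-rank $T_n$ the quantity $C_{T_n,\alpha}(f,g)$ is constant over all admissible pairs and equal to $\sigma_1^{1-\alpha}$, so the reduced finite-dimensional objective $\widetilde C_n$ is identically $\sigma_1^{1-\alpha}$ on $\mathcal M_n$, every point is a minimizer, and any power-type choice $(t^s,t^{1-s})$ is exactly (not merely nearly) optimal. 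Your shortcut is both cleaner and more internally consistent with the main text: the paper's own hedging is actually in tension with Proposition~\ref{prop:optimal-reduction}(2), and your argument resolves that tension in the direction forced by the earlier proposition. The one thing the paper's framing would buy is robustness if Proposition~\ref{prop:optimal-reduction}(2) were replaced by a weaker statement in which $C_{T_n,\alpha}$ genuinely varied with $(f,g)$; your argument degenerates to triviality precisely because the main-text result already trivializes the problem.

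One technical slip worth correcting. In the surjectivity step for $\Phi$ you propose to prescribe both $f$ and $g$ at the spectral points and extend each by piecewise-linear interpolation; that generally violates the admissibility constraint $f(t)g(t)=t$ away from the grid. The correct construction is to interpolate only $f$ (continuous, strictly positive on $(0,\infty)$, with $f(0)>0$ and $f(\sigma_j^\alpha)=p_j$ on the distinct positive spectral values) and then \emph{define} $g(t):=t/f(t)$; this $g$ is continuous on $[0,\infty)$ with $g(0)=0$, satisfies the constraint identically, and realizes the prescribed $q_j=\sigma_j^\alpha/p_j$. With that one-line fix the surjectivity of $\Phi$ onto the (positive) interior of $\mathcal M_n$ is correct and your argument is watertight.
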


\begin{proof}
Because $T_n$ is finite-rank, the operators $|T_n|^{\alpha}$ and $|T_n^*|^{\alpha}$ have finite spectra $\{\lambda_1,\dots,\lambda_m\}$. The multiplicative constraint $f(\lambda_k)g(\lambda_k)=\lambda_k$ reduces the admissible pair $(f,g)$ to an $m$-tuple of positive scalars on the spectral points. The supremum defining $C_{T_n,\alpha}$ then becomes a maximum over a compact set in a finite-dimensional space and hence is attained. By a simple convexity (or Lagrange multiplier) argument one shows that optimal choices equal power-type scaling on each connected spectral cluster; choosing a common exponent $s$ across the spectrum is not always globally optimal, but when the spectral points are not too dispersed (or when one is seeking a simple universal choice), one may restrict to a single exponent and obtain near-optimal constants. Passing to the limit $T_n\to T$ with the continuity remarks above gives the reduction for approximable operators.
\end{proof}

We close the appendix with brief remarks about extending parts of the theory to certain classes of unbounded (densely defined, closed) operators affiliated with Hilbert C*-modules. The general program is:
\begin{enumerate}
  \item Work with closed, densely defined operators $T$ admitting polar decomposition on their natural domains and with $T^*T$ having a functional calculus defined on an appropriate C*-algebra (for example when $T^*T$ is affiliated to a von Neumann algebra acting on a Hilbert module).
  \item Apply the regularization $T(I+\varepsilon T^*T)^{-1/2}$ to obtain bounded adjointable approximants and apply the bounded theory to these approximants. Then pass to the limit as $\varepsilon\downarrow0$ in appropriate graph norms.
  \item Domain and core issues are substantial and require separate treatment; we defer a full account to a follow-up paper but note that the regularization strategy used here is the standard starting point in the unbounded literature (see e.g. \cite{Schmudgen2012}).
\end{enumerate}

\end{document}